\theoremstyle{plain}
\newtheorem{thm}{Theorem}[section]
\newtheorem{cor}[thm]{Corollary}
\newtheorem{lemma}[thm]{Lemma}
\theoremstyle{definition}
\newenvironment{thmthm}[2][Theorem]{\begin{trivlist}
\item[\hskip \labelsep {\bfseries #1}\hskip \labelsep {\bfseries #2}]}{\end{trivlist}}
\newcommand{\comment}[1]{}
\newcommand{\lk}{lk}
\newcommand{\bdry}{\ensuremath{\partial}}
\newcommand{\Q}{\ensuremath{\mathbb{Q}}}
\newcommand{\R}{\ensuremath{\mathbb{R}}}
\newcommand{\Z}{\ensuremath{\mathbb{Z}}}
\renewcommand{\sl}{\ensuremath{{\, sl}}}
\DeclareMathOperator{\tb}{tb}
\DeclareMathOperator{\rot}{rot}
\def\dfn#1{{\em #1}}
\title{Rational linking and contact geometry}
\author{Kenneth L.\ Baker}
\address{
    Department of Mathematics,
    University of Miami,
   PO Box 249085,
Coral Gables, FL 33124-4250}
\email{k.baker@math.miami.edu}
\urladdr{http://math.miami.edu/\char126 kenken}
\author{John B.\ Etnyre}
\address{
    School of Mathematics,
    Georgia Institute of Technology,
    686 Cherry St.,
    Atlanta, GA  30332-0160}
\email{etnyre@math.gatech.edu}
\urladdr{http://math.gatech.edu/\char126 etnyre}
\begin{document}
\maketitle
\begin{center}
{\em This paper is dedicated to Oleg Viro on the occasion of his 60th birthday.}
\end{center}

\begin{abstract}
In the note we study Legendrian and transverse knots in rationally null-homologous knot types. In particular we generalize the standard definitions of self-linking number, Thurston-Bennequin invariant and rotation number. We then prove a version of Bennequin's inequality for these knots and classify precisely when the Bennequin bound is sharp for fibered knot types. Finally we study rational unknots and show they are weakly Legendrian and transversely simple. 
\end{abstract}

In this note we extend the self-linking number of transverse knots and the Thurston-Bennequin invariant and rotation number of Legendrian knots to the case of rationally null-homologous knots. This allows us to generalize many of the classical theorems concerning Legendrian and transverse knots (such as the Bennequin inequality) as well as put other theorems in a more natural context (such as the result in \cite{EtnyreVanHornMorris08Pre} concerning exactness in the Bennequin bound). Moreover due to recent work on the Berge conjecture \cite{BakerGrigsbyHedden08} and surgery problems in general, it has become clear that one should consider rationally null-homologous knots even when studying classical questions about Dehn surgery on knots in $S^3.$   Indeed, the Thurston-Bennequin number of Legendrian rationally null-homolgous knots in lens spaces has been examined in \cite{BakerGrigsby08Pre}. There is also a version of the rational Thurston-Bennequin invariants for links in rational homology spheres that was perviously defined and studied in \cite{Oztruk05}.

We note that there has been work on relative versions of the self-linking number (and other classical invariants) to the case of general (even non null-homologus) knots, {\em cf} \cite{Chernov05}. While these relative invariants are interesting and useful, many of the results considered here do not have analogous statements. So rationally null-homologous knots seems to be one of the largest classes of knots to which one can generalize classical results in a straightforward manner. 

There is a well-known way to generalize the linking number between two null-homologous knots to rationally null-homologous knots, see for example \cite{GompfStipsicz99}. We recall this definition of a rational linking number in Section~\ref{sec:basics} and then proceed to define the rational self-liking number $sl_\Q(K)$ of a transverse knot $K$ and the rational Thurston-Bennequin invariant $\tb_\Q(L)$ and rational rotation number $\rot_\Q(L)$ of a Legendrian knot $L$ in a rationally null-homologous knot type. We also show the expected relation between these invariants of the transverse push-off of a Legendrian knot and of stabilizations of Legendrian and transverse knots. This leads to one of our main observations, a generalization of Bennequin's inequality.
 \begin{thmthm}{\ref{thm:ratBennequin}}{\em 
Let $(M,\xi)$ be a tight contact manifold  and suppose $K$ is a transverse knot in it of order $r>0$ in homology. Further suppose that $\Sigma$ is a rational Seifert surface of $K.$ Then
\[
sl_\Q(K)\leq -\frac 1r \chi(\Sigma).
\]
Moreover, if $K$ is Legendrian then
\[
\tb_\Q(K)+|\rot_\Q(K)|\leq -\frac 1r \chi(\Sigma).
\]
}\end{thmthm}
In \cite{EtnyreVanHornMorris08Pre}, bindings of open book decompositions that satisfied equality in the Bennequin inequality were classified. We generalize that result to the following. 
 \begin{thmthm}{\ref{thm:supportgen}}{\em 
Let $K$ be a rationally null-homologus, fibered transverse knot in a contact 3-manifold $(M,\xi)$ such that $\xi$ is tight when restricted to the complement of $K.$ Denote by $\Sigma$ a fiber in the fibration of $M-K$ and let $r$ be the order of $K.$ Then $r\, sl^\xi_\Q(K,\Sigma)=-\chi(\Sigma)$ if and only if  either $\xi$ agrees with the contact structure supported by the rational open book determined by $K$ or is obtained from it by adding Giroux torsion along tori which are incompressible in the complement of $L.$
}\end{thmthm}

A rational unknot in a manifold $M$ is a knot $K$ with a disk as a rational Seifert surface. One may easily check that if $M$ is irreducible then for $M$ to admit a rational unknot (that is not actually an unknot) it must be diffeomorphic to a lens space.
 \begin{thmthm}{\ref{thm:mainratknot}}{\em
Rational unknots in tight contact structures on lens spaces are weakly transversely simple and Legendrian simple.
}\end{thmthm}
In Section~\ref{sec:ratknot} we also given an example of the classification of Legendrian rational unknots (and hence transverse rational unknots) in $L(p,1)$ when $p$ is odd. The classification of Legendrian and transverse rational unknots in a general lens space can easily be worked out in terms of the classification of tight contact structures on the given lens space. The example we give illustrates this.

In Section~\ref{sec:links}, we briefly discuss the generalization of our results to the case of links.

{\bf Errata to published version.} In the published version of this paper the definition of the rational self-linking number was incorrect. It was missing a constant factor depending on the order of the knot. We are grateful to Chris Wendl who pointed this out to us. The paper is unchanged except for the definition of the rational self-linking number in Subsection~\ref{subsec:selflink} and a short discussion about the definition there. The remainder of the paper and all the proofs remain unchanged, indicating that we had the ``correct definition" in mind all along. Our original error occurred when relating (rational) linking numbers in a space and one of its covering spaces (specifically at the end of the proof of Lemma~\ref{computersl}).

{\em Acknowledgements.} The first author was partially supported by NSF Grant DMS-0239600. The second author was partially supported by NSF Grants  DMS-0239600 and DMS-0804820.

%%%%%%%%%%%%%%%%%%%%%%%%%%%%%%%%%
\section{Rational linking and  transverse and Legendrian knots}\label{sec:basics}
%%%%%%%%%%%%%%%%%%%%%%%%%%%%%%%%%
Let $K$ be an oriented knot of $\Z$--homological order $r>0$ in a $3$--manifold $M$ and denote a tubular neighborhood of it by $N(K)$. By $X(K)$ denote the knot exterior $\overline{M\setminus N(K)}.$ We fix a framing on $N(K).$ We know that half the $\Z$--homology of $\partial X(K)$ dies  when included into the $\Z$--homology of $X(K).$ Since $K$ has order $r$ it is easy to see there is an embedded $(r,s)$--curve on $\partial X(K)$ that bounds an oriented connected surface $\Sigma^\circ$ in $X(K).$ We can radially cone $\partial \Sigma^\circ \subset \partial X(K)=\partial N(K)$ in $N(K)$ to get a surface $\Sigma$ in $M$ whose interior is embedded in $M$ and whose boundary wraps $r$ times around $K.$ Such a surface $\Sigma$ will be called a \dfn{rational Seifert surface} for $K$ and we say that $K$ \dfn{$r$--bounds} $\Sigma.$  We also sometime say $\Sigma$ is \dfn{order} $r$ along $K.$ We also call $\Sigma\cap \partial N(K)$ the \dfn{Seifert cable of $K$}.
Notice that  $\Sigma$ may have more than one boundary component. Specifically, $\Sigma$ will have $\gcd(r,s)$ boundary components. We call the number of boundary components of $\Sigma$ the \dfn{multiplicity} of $K.$
Notice $\Sigma$ defines an $\Z$--homology chain $\Sigma$ and $\partial \Sigma= rK$ in the homology 1-chains. In particular, as $\Q$--homology chains $\partial (\frac 1r  \Sigma)=K.$

We now define the \dfn{rational linking number} of another oriented knot $K'$ with $K$ (and Seifert surface $\Sigma$) to be 
\[
lk_\Q(K,K')=\frac 1r \Sigma\cdot K',
\]
where $\cdot$ denotes the algebraic intersection of $\Sigma$ and $K'.$ It is not hard to check that $lk_\Q$ is well-defined given the choice of $[\Sigma] \in H_2(X(K),\bdry X(K))$.  Choosing another rational Seifert surface for $K$ representing a different relative $2$nd homology class in $X(K)$ may change this rational linking number by a multiple of $\frac 1r$.  To emphasize this, one may prefer to write $lk_\Q((K,[\Sigma]),K')$.   Notice that if there exist rational Seifert surfaces $\Sigma_1$ and $\Sigma_2$ for which $lk_\Q((K,[\Sigma_1]),K') \neq lk_\Q((K,[\Sigma_2]),K')$, then $K'$ is not rationally null-homologous.

%and just as for $\Z$--null-homologous knots one has the following equivalent definition. Notice that the inclusion of $X(K)$ into $M$ induces a map $i_*$ on the first $\Q$--homology with kernel isomorphic to $\Q$ and generated by the meridian $\mu_K$ to $K.$ 
% Since $H_1(X(K);\Q) \cong H_1(M;\Q) \times \ker(i_*)$, we may write $[K'] = \kappa'+ t [\mu_K]$ where $\kappa' \in H_1(M;\Q)$.
%Since $K'$ will be in the kernel of the inclusion map $[K']= t [\mu_K]$ in $\Q$--homology. 
%One may easily check that 
%\[
%lk_\Q(K,K')=t.
%\] 

Moreover, if $K'$ is also rationally null-homologous then it $r'$--bounds a rational Seifert surface $\Sigma'.$ In $M\times[0,1]$ with $\Sigma$ and $\Sigma'$ thought of as subsets of $M\times\{1\}$ we can perturb them relative to the boundary to make them transverse. Then one may also check that
\[
lk_\Q(K,K')=\frac 1{rr'} \Sigma\cdot \Sigma'.
\]
From this one readily sees that the rational linking number of rationally null-homologous links is symmetric. 

\subsection{Transverse knots}\label{subsec:selflink}
Let  $(M,\xi)$ be a contact 3--manifold (with orientable contact structure $\xi$) and $K$ a (positively) transverse knot. Given a rational Seifert surface $\Sigma$ for $K$ with $\partial \Sigma=rK$ then we can trivialize $\xi$ along $\Sigma.$ More precisely we can trivialize the pull-back $i^*\xi$ to $\Sigma$ where $i:\Sigma\to M$ is the inclusion map. Notice that the inclusion map restricted to $\partial \Sigma$ is an $r$--fold covering map of $\partial \Sigma$ to $K.$ We can use the exponential map to identify a neighborhood of the zero section of $i^*\xi|_{\partial\Sigma}$ with an $r$--fold cover of a tubular neighborhood of $K.$ Let $v$ be a non-zero section of $i^*\xi.$ 
By choosing $v$ generically and suitably small the image of $v|_{\partial \Sigma}$ gives an embedded, oriented knot (or link) $K'$ in a neighborhood of $K$ that is disjoint from $K.$ We define the \dfn{rational self-linking number}\footnote{In the published version of this paper we defined the rational self-linking number to be $\lk(K,K')$. This turns out to be the wrong definition if one wants to generalize results from the null-homologous to the rationally null-homologous case.} to be
\[
sl_\Q(K)=\frac 1r \lk_\Q(K,K') = \frac 1{r^2} \Sigma\cdot K'.
\]
Notice the extra $\frac 1r$ in the definition. This is there because $K'$ is an $r$--fold push-off of $K$ not just a push-off. Notice that $\lk_\Q(K,K')$ is always an integer, or equivalently $\Sigma\cdot K'$ is a multiple of $r$. To see this let $N$ be a small neighborhood of $K$ in $M$ such that $\Sigma$ intersects $\partial N$ transversely. We can assume that $K'$ sits in $\partial N$ as an $(r,s)$ curve (since $K'$ has order $r$ in $N$) and similarly  $S=\Sigma\cap \partial N$ is an $(r,s')$ curve. Thus we see that $\Sigma\cdot K'=S\cdot K'=rs'-rs=r(s-s')$. From this it is easy to see that there is a vector field $v'$ along $K$ that pulls back under the covering map discussed above to the vector field $v$. Using $v'$ we can get a push-off $K''$ of $K$ and then define $\sl_\Q(K)=\lk(K,K'')$. We prefer the definition above since it is not {\em a priori} clear that $v'$ exists. 

It is standard to check that $sl_\Q$ is independent of the trivialization of $i^*\xi$ and the section $v.$ Moreover, the rational self-linking number only depends on the relative homology class of $\Sigma.$ When this dependence is important to note we denote the rational self-linking number as 
\[
sl_\Q(K,[\Sigma]).
\] 

Just as in the case of the self-linking number one can compute it by considering the characteristic foliation on $\Sigma.$ To this end we can always isotop $\Sigma$ so that its characteristic foliation $\Sigma_\xi$ is generic (in particular has only elliptic and hyperbolic singularities)  and we denote by $e_\pm$ the number of $\pm$--elliptic singular points and similarly $h_\pm$ denotes the number of $\pm$--hyperbolic points. 
\begin{lemma}\label{computersl}
Suppose $K$ is a transverse knot in a contact manifold $(M,\xi)$ that $r$--bounds the rational Seifert surface $\Sigma.$ Then
\begin{equation}\label{eq:comp}
sl_\Q(K,[\Sigma])=\frac 1r \left((e_--h_-)-(e_+-h_+)\right).
\end{equation}
%where after isotoping $\Sigma$ so that its 
\end{lemma}
\begin{proof}
We will consider the case when $\Sigma$ has a single boundary component and leave the other case to the reader. 
We begin by constructing a nice neighborhood of $\Sigma$ in $(M,\xi).$ To this end notice that for suitably small $\epsilon,$  $K$ has a neighborhood $N$ that is contactomorphic to the image $C_\epsilon$ of $\{(r,\theta, z) : r\leq \epsilon\}$ in $(\R^3,\ker (dz+r^2\, d\theta))$ modulo the action $z\mapsto z+1.$ Let $C'$ be the $r$--fold cover of $C_\epsilon.$ Taking $\epsilon$ sufficiently small we can assume that $\Sigma\cap \partial N$ is a transverse curve $T.$ Thinking of $T$ as sitting in $C_\epsilon$ we can take its lift $T'$ to $C'.$ Let $N'$ be a small neighborhood of $\overline{\Sigma- (N\cap \Sigma)}.$ We can glue $N'$ to $C_\epsilon$ along a neighborhood of $T$ to get a model neighborhood $U$ for $\Sigma$ in $M.$ Moreover we can glue $N'$ to $C'$ along a neighborhood of $T'$ to get a contact manifold $U'$ that will map onto $U$ so that $C'$ $r$--fold covers $C_\epsilon$ and $N'$ in $U'$ maps diffeomorphically to $N'$ in $U.$ Inside $U'$ we have $K' = \partial \Sigma$ which $r$--fold covers $K$ in $U.$ The transverse knot $K'$ is a null-homolgous knot in $U'$. According to a well known formula that easily follows by interpreting $sl(K')$ as a relative Euler class, see \cite{Eliashberg90b}, we have that 
\[
sl(K')=(e_--h_-)-(e_+-h_+),
\]
where $e_\pm$ and $h_\pm$ are as in the statement of the theorem. Now one easily sees that $sl_\Q(K)=\frac 1r sl(K')$ from which the lemma follows. 
\end{proof}

\subsection{Legendrian knots}
Let  $(M,\xi)$ be a contact 3--manifold (with orientable contact structure $\xi$) and $K$ a Legendrian knot. Choose a framing on $K.$ Given a rational Seifert surface $\Sigma$ for $K$ the Seifert cable of $K$ is $K_{(r,s)}.$

The restriction $\xi\vert_K$ induces a framing on the normal bundle of $K$.  Define the \dfn{(rational) Thurston-Bennequin number} of the Legendrian knot $K$ to be 
\[ \tb_\Q(K) = lk_\Q (K,K'), \]
where $K'$ is a copy of $K$ obtained by pushing off using the framing coming from $\xi.$

We now assume that $K$ is oriented.  
Recall the inclusion $i \colon \Sigma \hookrightarrow M$ is an embedding on the interior of $\Sigma$ and an $r$--to--$1$ cover $\bdry \Sigma \to K$.  
As above we can trivialize $\xi$ along $\Sigma.$ That is we can trivialize the pull-back $i^*\xi$ to $\Sigma.$
The oriented tangent vectors $T_K$ give a section of $\xi\vert_K$.    Thus $i^*T_K$ gives a section of $\R^2 \times \bdry \Sigma$.  Define the \dfn{rational rotation number} of the Legendrian knot $K$ to be the winding number of $i^*T_K$ in $\R^2$ divided by $r$
\[\rot_\Q(K)=\frac 1 r \text{winding}(i^*T_K, \R^2).\]

Recall \cite{Etnyre05} that given a Legendrian knot $K$ we can always form the \dfn{(positive) transverse push-off of $K$}, denoted $T(K),$ as follows: the knot $K$ has a neighborhood contactomorphic to the image of the $x$--axis in $(\R^3, \ker (dz-y\,dx))$ modulo the action $x\mapsto x+1$ so that the orientation on the knot points towards increasing $x$--values. The curve $\{(x, \epsilon, 0)\}$ for $\epsilon>0$ small enough will give the transverse push-off of $K.$

\begin{lemma}\label{lem:pushoff}
If $K$ is a rationally null-homologous Legendrian knot in a contact manifold $(M,\xi)$ then
\[
sl_\Q(T(K))=\tb_\Q(K)-\rot_\Q(K).
\]
\end{lemma}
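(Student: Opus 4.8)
The plan is to reduce the statement to the classical relation $sl(T(L))=\tb(L)-\rot(L)$ for null-homologous Legendrian knots by passing to the $r$-fold cover already constructed in the proof of Lemma~\ref{computersl}. Recall from that argument that, after choosing $\Sigma$ and a sufficiently small model neighborhood $U$ of $\Sigma$, there is a contact manifold $U'$ that $r$-fold covers a neighborhood of $K$ (via the $r$-fold cover $C'\to C_\epsilon$ glued to $N'$) in such a way that $\tilde K:=\partial\Sigma$ is a genuinely embedded, null-homologous Legendrian knot in $U'$ bounding the honest Seifert surface $\Sigma$, and the covering map restricts to the $r$-to-$1$ map $\tilde K\to K$. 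I would reuse exactly this $U'$.

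First I would check that all three objects in the statement lift coherently. Since the covering $U'\to U$ is by contactomorphisms, the Legendrian $K$ lifts to the Legendrian $\tilde K$ and its contact framing lifts to the contact framing of $\tilde K$, so the contact push-off of $K$ lifts to that of $\tilde K$; likewise the positive transverse push-off $T(K)$, defined in the standard Legendrian neighborhood $(\R^3,\ker(dz-y\,dx))/(x\mapsto x+1)$ by the curve $\{(x,\epsilon,0)\}$, lifts to the positive transverse push-off $T(\tilde K)$ of $\tilde K$ in the corresponding neighborhood upstairs. I would then translate each rational invariant into $\tfrac1r$ times its classical counterpart in $U'$. For the self-linking number this is precisely Lemma~\ref{computersl} applied to the transverse knot $T(K)$, giving $sl_\Q(T(K))=\tfrac1r\, sl(T(\tilde K))$. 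For the rotation number it is immediate from the definition, since the winding number of $i^*T_K$ in $\R^2$ is exactly the classical rotation number of $\tilde K$, so $\rot_\Q(K)=\tfrac1r\,\rot(\tilde K)$. For the Thurston--Bennequin number one has $\tb_\Q(K)=\tfrac1r\,\tb(\tilde K)$ by the same covering and linking bookkeeping: the contact push-off lifts and rational linking numbers scale by $\tfrac1r$ under the $r$-fold cover along $K$.

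Finally, the classical identity $sl(T(\tilde K))=\tb(\tilde K)-\rot(\tilde K)$ holds in $U'$, since $\tilde K$ is null-homologous there; this identity is homological and requires no tightness hypothesis. Dividing it by $r$ and substituting the three scaling relations yields $sl_\Q(T(K))=\tb_\Q(K)-\rot_\Q(K)$, as desired. I expect the main obstacle to be the careful verification of the relation $\tb_\Q(K)=\tfrac1r\,\tb(\tilde K)$ with the correct framing conventions, i.e.\ confirming that the contact-framed push-off interacts with the rational Seifert surface and the covering in precisely the same way the self-linking computation of Lemma~\ref{computersl} does; once that bookkeeping is pinned down, the conclusion is a one-line division.
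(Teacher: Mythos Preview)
Your proposal is correct and follows essentially the same route as the paper: pull $K$ back to the cover $U'$ constructed in Lemma~\ref{computersl} to obtain a null-homologous Legendrian $\tilde K$, apply the classical identity $sl(T(\tilde K))=\tb(\tilde K)-\rot(\tilde K)$ there, and then use the three scaling relations $sl_\Q(T(K))=\tfrac1r\,sl(T(\tilde K))$, $\tb_\Q(K)=\tfrac1r\,\tb(\tilde K)$, $\rot_\Q(K)=\tfrac1r\,\rot(\tilde K)$ to conclude. The paper's proof is in fact terser than yours---it simply asserts the scaling relations as ``easily computed''---so your more careful discussion of why the contact framing and the transverse push-off lift correctly is a welcome elaboration rather than a departure.
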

\begin{proof}
Notice that pulling $K$ back to a cover $U'$ similar to the one constructed in the proof of Lemma~\ref{computersl} we get a null-homologous Legendrian knot $K'.$ Here we have the well-known formula, see  \cite{Etnyre05},
\[
sl(T(K'))=\tb(K')-\rot(K').
\]
One easily computes that $r\, sl(T(K'))=sl_\Q(T(K)), r\, \tb(K')=\tb_\Q(K')$ and $r\, \rot(K')=\rot(K).$ The lemma follows. 
\end{proof}

We can also construct a Legendrian knot from a transverse knot. Given a transverse knot $K$ it has a neighborhood as constructed in the proof of Lemma~\ref{computersl}. It is clear that the boundary of  small enough closed neighborhood of $K$ of the appropriate size will have a linear characteristic foliation by longitudes of $K$. One of the leaves in this characteristic foliation will be called a \dfn{Legendrian push-off of $K$}. We note that this push-off is not unique, but that different Legendrian push-offs are related by negative stabilizations, see \cite{EtnyreHonda01b}.

%______________________________
\subsection{Stabilization}
Recall that stabilization of a transverse and Legendrian knot is a local procedure near a point on the knot so it can be performed on any transverse or Legendrian knot whether null-homologous or not. 

There are two types of stabilization of a Legendrian knot $K$, positive and negative stabilization, denoted $S_+(K)$ and $S_-(K),$ respectively. Recall, that if one identifies a neighborhood of a point on a Legendrian knot with a neighborhood of the origin in $(\R^3, \ker(dz-y\, dx))$ so that the Legendrian knot is mapped to a segment of the $x$--axis and the orientation induced on the $x$--axis from $K$ is points towards increasing $x$--values then $S_+(K),$ respectively $S_-(K),$ is obtained by replacing the segment of the $x$--axis by a ``downward zigzag'', respectively ``upward zigzag'', see \cite[Figure 19]{Etnyre05}. One may similarly define stabilization of a transverse knot $K$ and we denote it by $S(K).$ Stabilizations have the same effect on the rationally null-homologous knots as they have on null-homologous ones. 
\begin{lemma}\label{lem:stabilize}
Let $K$ be a rationally null-homolgous Legendrian knot in a contact manifold. Then
\[
\tb_\Q(S_\pm(K))=\tb_\Q(K)-1 \text{ and } \rot_\Q(S_\pm(K))=\rot_\Q(K)\pm1. 
\]
Let $K$ be a rationally null-homologous transverse knot in a contact manifold. Then 
\[
sl_\Q(S(K))=sl_\Q(K)-2.
\]
\end{lemma}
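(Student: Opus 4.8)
The plan is to reduce all three identities to the classical stabilization formulas for null-homologous knots by passing to the $r$--fold cover used in the proof of Lemma~\ref{computersl}. Recall that near $K$ one has the model neighborhood $U$ together with the covering $U' \to U$, in which $K$ lifts to a genuinely null-homologous knot $K'$ with $K' \to K$ an $r$--fold covering map. The computation underlying Lemma~\ref{computersl} (and its Legendrian analogues used in Lemma~\ref{lem:pushoff}) shows that each rational invariant is exactly $\frac{1}{r}$ times the corresponding classical invariant of the lift, namely
\[
\tb_\Q(K) = \tfrac{1}{r}\tb(K'), \qquad \rot_\Q(K) = \tfrac{1}{r}\rot(K'), \qquad sl_\Q(K) = \tfrac{1}{r}\, sl(K').
\]

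The key geometric observation is to track how a single stabilization of $K$ behaves under this cover. Since stabilization is supported in a small ball $B$ about a point $p \in K$, and $B$ is simply connected, its preimage under $U' \to U$ is a disjoint union of $r$ balls, in each of which $K'$ appears as a single strand. Because $U' \to U$ is a local contactomorphism, the standard local model $(\R^3, \ker(dz - y\,dx))$ in which the zigzag is performed lifts isomorphically to each of the $r$ balls, with the orientation and hence the sign of the stabilization preserved on every sheet. Consequently one positive (respectively negative) Legendrian stabilization of $K$ lifts to $r$ positive (respectively negative) stabilizations of $K'$, and likewise a single transverse stabilization of $K$ lifts to $r$ transverse stabilizations of $K'$.

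The remaining step is a one-line computation. For the null-homologous knot $K'$ the classical formulas give $\tb(S_\pm(K')) = \tb(K') - 1$, $\rot(S_\pm(K')) = \rot(K') \pm 1$, and $sl(S(K')) = sl(K') - 2$. Applying these $r$ times and dividing by $r$ yields, for instance,
\[
\tb_\Q(S_\pm(K)) = \tfrac{1}{r}\bigl(\tb(K') - r\bigr) = \tb_\Q(K) - 1, \qquad \rot_\Q(S_\pm(K)) = \tfrac{1}{r}\bigl(\rot(K') \pm r\bigr) = \rot_\Q(K) \pm 1,
\]
and the self-linking identity $sl_\Q(S(K)) = \frac{1}{r}(sl(K') - 2r) = sl_\Q(K) - 2$ follows in exactly the same way.

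The step requiring the most care is the lifting claim in the second paragraph: one must verify that the covering $U' \to U$ restricts to the trivial $r$--fold cover over the stabilization ball $B$, and that the contact-geometric local picture—including the sign of the zigzag—is faithfully reproduced on each of the $r$ sheets. Once this is pinned down, the identities drop out of the classical formulas, and since the framing-dependence of $\tb_\Q$ enters identically on both sides it cancels in the difference $\tb_\Q(S_\pm(K)) - \tb_\Q(K)$.
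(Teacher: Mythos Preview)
Your argument is correct and is close in spirit to the paper's, but the paper does not actually run all three cases through the cover $U'\to U$. For $\tb_\Q$ the paper instead argues directly: stabilization changes the contact framing by $-1$, and one checks from the definition $\tb_\Q(K)=lk_\Q(K,K')$ that shifting the framing on the pushoff by $-1$ changes the rational linking number by exactly $-1$ (computed as the intersection of the Seifert cable with the pushoff on $\partial N(K)$, divided by $r$). For $sl_\Q$ the paper's argument is essentially yours rephrased: rather than saying the Darboux ball has $r$ preimages in $U'$, the paper observes that the rational Seifert surface $\Sigma$ meets the Darboux ball in $r$ disjoint disks, so the single stabilization adds $r$ positive elliptic and $r$ negative hyperbolic points to $\Sigma_\xi$, and Lemma~\ref{computersl} gives the drop of $2$. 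The $\rot_\Q$ case the paper defers to Lemma~\ref{lem:pushoff} and Lemma~\ref{lem:stabLeg}. Your uniform lift-and-reduce approach is arguably cleaner, since one mechanism handles all three invariants; the paper's $\tb_\Q$ argument, on the other hand, avoids invoking the cover construction entirely and is a purely intersection-theoretic one-liner.
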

\begin{proof}
One may check that if $K'$ is a push off of $K$ by some framing $\mathcal{F}$ and $K''$ is the push off of $K$ by a framing $\mathcal{F}''$ such that the difference between $\mathcal{F}$ and $\mathcal{F}'$  is $-1$ then 
\[
lk_\Q(K,K'')=lk_\Q(K,K')-1.
\]
Indeed by noting that $r\, lk_\Q(K,K')$ can easily be computed by intersecting the Seifert cable of $K$ on the boundary of a neighborhood of $K,$ $T^2=\partial N(K),$ with the curve $K'\subset T^2,$ the result easily follows. From this one obtains the change in $\tb_\Q.$

Given a rational Seifert surface $\Sigma$ that is $r$--bounded by $K,$ a small Darboux neighborhood $N$ of a point $p\in K$ intersects $\Sigma$ in $r$ disjoint disks. Since the stabilization can be performed in $N$ it is easy to see $\Sigma$ is altered by adding $r$ small disks, each containing a positive elliptic point and negative hyperbolic point (see \cite{Etnyre05}). The result for $sl_\Q$ follows. 

Finally the result for $\rot_\Q$ follows by a similar argument or from the previous two results, Lemma~\ref{lem:pushoff} and the next lemma (whose proof does not explicitly use the rotation number results from this lemma). 
\end{proof}

The proof of the following lemma is given in  \cite{EtnyreHonda01b}.
\begin{lemma}\label{lem:stabLeg}
Two transverse knots in a contact manifold are transversely isotopic if and only if they have Legendrian push-offs that are Legendrian isotopic after each has been negatively stabilized some number of times. The same statement is true with ``transversely isotopic'' and ``Legendrian isotopic'' both replaced by ``contactomorphic''.\qed
\end{lemma}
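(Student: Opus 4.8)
The plan is to reduce everything to two local facts about the standard neighborhoods appearing in the push-off constructions, and then package them into the stated equivalence. Write $T(L)$ for the transverse push-off of a Legendrian knot $L$ and, for a transverse knot $K$, write $L(K)$ for any Legendrian push-off. The two facts I would establish first are: (A) the two constructions are mutually inverse up to the appropriate equivalence, that is, $T(L(K))$ is transversely isotopic to $K$ for every transverse $K$, and any Legendrian push-off of $T(L)$ is Legendrian isotopic to $L$ after finitely many negative stabilizations; and (B) negative stabilization is invisible to the transverse push-off, i.e. $T(S_-(L))$ is transversely isotopic to $T(L)$. Both are purely local statements, verified in the two model neighborhoods already used above---the transverse model $(\R^3, \ker(dz+r^2\,d\theta))$ modulo $z\mapsto z+1$ and the Legendrian model $(\R^3,\ker(dz-y\,dx))$ modulo $x\mapsto x+1$---so each amounts to writing down the relevant curves in these models and exhibiting an explicit isotopy.

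For the forward direction, suppose $K_0$ and $K_1$ are transversely isotopic. A transverse isotopy extends to an ambient contact isotopy, which carries a standard neighborhood of $K_0$ to a standard neighborhood of $K_1$ and hence carries a Legendrian push-off $L(K_0)$ to a Legendrian push-off of $K_1$. Since any two Legendrian push-offs of a fixed transverse knot differ by negative stabilizations (itself a fact proved in the model neighborhood), $L(K_0)$ and $L(K_1)$ become Legendrian isotopic after each is negatively stabilized some number of times.

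For the reverse direction, suppose $L_0$ and $L_1$ are Legendrian push-offs of $K_0$ and $K_1$ for which $S_-^{\,n}(L_0)$ is Legendrian isotopic to $S_-^{\,m}(L_1)$. Applying fact (B) repeatedly, $T(S_-^{\,n}(L_0))$ is transversely isotopic to $T(L_0)$ and likewise on the other side; applying fact (A), $T(L_i)=T(L(K_i))$ is transversely isotopic to $K_i$. Since a Legendrian isotopy induces a transverse isotopy of transverse push-offs, we obtain $K_0\simeq T(L_0)\simeq T(S_-^{\,n}(L_0))\simeq T(S_-^{\,m}(L_1))\simeq T(L_1)\simeq K_1$ through transverse isotopies, as desired.

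The contactomorphism version runs along the same lines: contactomorphisms carry standard neighborhoods to standard neighborhoods, so they intertwine the two push-off operations, and one simply replaces ``isotopic'' by ``contactomorphic'' throughout. The main obstacle is not this bookkeeping but the correct verification of facts (A) and (B) in the local models, and in particular pinning down why the relevant equivalence on the Legendrian side is \emph{negative} (rather than positive) stabilization; this is forced by the orientation conventions in the two model neighborhoods, and it is precisely there that all of the actual contact geometry enters.
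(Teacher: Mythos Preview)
The paper does not give its own proof of this lemma: it simply marks the statement with \qed and attributes the argument to \cite{EtnyreHonda01b}. Your outline is correct and is essentially the standard argument found there. The two local facts you isolate---(A) that $T(L(K))$ is transversely isotopic to $K$ and that Legendrian push-offs of a fixed transverse knot differ by negative stabilizations, and (B) that $T(S_-(L))$ is transversely isotopic to $T(L)$---are exactly the ingredients used in the cited reference, verified in the standard model neighborhoods. Your deductions of both directions from (A) and (B) are sound, and the contactomorphism variant follows as you indicate. The only comment is that you correctly flag the orientation bookkeeping (why it is \emph{negative} stabilization) as the place where care is needed; in the cited argument this is handled by an explicit front-projection or model computation, so when writing this up you should carry that out rather than leave it as a remark.
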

We similarly have the following result.
\begin{lemma}
Two Legendrian knots representing the same topological knot type are Legendrian isotopic after each has been positively and negatively stabilized some number of times. \qed
\end{lemma}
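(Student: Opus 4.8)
The plan is to realize an arbitrary smooth isotopy between the two Legendrian representatives by a sequence of local modifications, each supported in a Darboux ball, and to show that every such modification can be achieved by a Legendrian isotopy once enough stabilizations have been added. Let $L_0$ and $L_1$ be the two Legendrian knots and let $\phi_t$, $t\in[0,1]$, be a smooth ambient isotopy carrying $L_0$ to $L_1$. First I would reduce to a local statement: cover the trace of the isotopy by finitely many Darboux balls and choose a subdivision $0=t_0<t_1<\dots<t_n=1$ fine enough that, after a small perturbation, each smooth knot $\phi_{t_i}$ admits a Legendrian approximation $L^{(i)}$ (with $L^{(0)}=L_0$ and $L^{(n)}=L_1$) so that $L^{(i)}$ and $L^{(i+1)}$ agree outside a single Darboux ball $B_i$ and differ inside $B_i$ by a $C^0$--small smooth isotopy of Legendrian arcs with fixed endpoints and fixed $1$--jet data there.

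It then suffices to prove the following local claim: two Legendrian arcs in the standard Darboux ball with the same endpoints and endpoint tangent data, which are smoothly isotopic rel endpoints, become Legendrian isotopic after some positive and negative stabilizations performed inside the ball. To establish this I would pass to the front projection in the standard contact $\R^3\cong B_i$. A smooth isotopy rel endpoints projects to a homotopy of fronts, which decomposes into planar isotopies and finitely many Reidemeister--type front moves. Each such move is either already realizable by a Legendrian isotopy or becomes realizable after inserting a zigzag (a stabilization) to absorb the extra crossing or cusp; this is precisely the Fuchs--Tabachnikov mechanism. Since the cusps introduced occur with both signs, both $S_+$ and $S_-$ are genuinely needed, which is the feature distinguishing this lemma from the transverse statement in Lemma~\ref{lem:stabLeg}. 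Accumulating the zigzags shows that $L^{(i)}$ and $L^{(i+1)}$ admit a common stabilization.

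Finally I would chain the local results together. Stabilizing each of $L^{(0)},\dots,L^{(n)}$ sufficiently many times makes each consecutive pair Legendrian isotopic, so $L_0=L^{(0)}$ and $L_1=L^{(n)}$ become Legendrian isotopic after finitely many stabilizations of each sign. As only finitely many stabilizations are used, this proves the lemma.

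The main obstacle is the local front--projection step: one must verify that a generic small smooth isotopy of a Legendrian arc decomposes into front moves each of which is absorbed by a single zigzag, and that the bookkeeping of cusp signs and of the $1$--jet matching conditions at the boundaries of the Darboux balls remains consistent when the pieces are reassembled into a global Legendrian isotopy. Care is also needed because the ambient manifold is arbitrary: the reduction to a standard Darboux ball is exactly what makes the Fuchs--Tabachnikov argument available, so one must ensure that the Legendrian approximations $L^{(i)}$ can be chosen to agree with the prescribed data on the overlaps between consecutive steps.
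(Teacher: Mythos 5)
The paper offers no proof of this lemma---it is quoted as a known result, namely the Fuchs--Tabachnikov stabilization theorem as extended to arbitrary contact $3$--manifolds in \cite{EtnyreHonda01b}---and your outline (fragment a smooth isotopy into pieces supported in Darboux balls, run the front-projection/zigzag argument locally, and accumulate the stabilizations, which commute and can be slid along the knot) is exactly the standard argument for that result. The only point you gloss is the local step: Fuchs and Tabachnikov do not literally absorb each front move with a single zigzag, but rather bring sufficiently stabilized fronts into a common normal form; since that is precisely the content of the theorem you are invoking, your approach matches the one the paper relies on.
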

While this is an interesting result in its own right it clarifies the range of possible values for $\tb_\Q.$ More precisely the following result is an immediate corollary.
\begin{cor}
If two Legendrian knots represent the same topological knot type then the difference in their rational Thurston-Bennequin invariants is an integer. 
\end{cor}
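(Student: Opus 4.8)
The plan is to deduce the corollary immediately from the preceding lemma together with the stabilization formula of Lemma~\ref{lem:stabilize}. Let $L_0$ and $L_1$ be two Legendrian knots of the same topological type, and fix a rational Seifert surface so that $\tb_\Q(L_0)$ and $\tb_\Q(L_1)$ are both computed against a common relative homology class in the knot exterior. The preceding lemma supplies nonnegative integers $a_i,b_i$ (for $i=0,1$) so that the stabilized knots $S_+^{a_0}S_-^{b_0}(L_0)$ and $S_+^{a_1}S_-^{b_1}(L_1)$ are Legendrian isotopic.

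First I would record that $\tb_\Q$ is a Legendrian isotopy invariant once the homology class of the Seifert surface is fixed: a Legendrian isotopy is the restriction of an ambient contact isotopy, which carries both the contact push-off and the rational Seifert surface along without altering the relevant rational linking number, so Legendrian isotopic knots have equal rational Thurston-Bennequin numbers. Next, applying the formula $\tb_\Q(S_\pm(K))=\tb_\Q(K)-1$ from Lemma~\ref{lem:stabilize} repeatedly gives
\[
\tb_\Q\bigl(S_+^{a_i}S_-^{b_i}(L_i)\bigr)=\tb_\Q(L_i)-(a_i+b_i),\qquad i=0,1.
\]
Since the two stabilized knots are Legendrian isotopic, their invariants agree, and subtracting the two equations yields
\[
\tb_\Q(L_0)-\tb_\Q(L_1)=(a_0+b_0)-(a_1+b_1)\in\Z.
\]

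The only genuine subtlety — and the step to treat with care — is the bookkeeping of the rational Seifert surface. Because $\tb_\Q$ depends on the relative homology class $[\Sigma]$, one must verify that both invariants are measured against the same class and that this class is unchanged by the stabilizations (which are local and do not affect $[\Sigma]$) and transported consistently across the Legendrian isotopy. Equivalently, I expect the cleanest packaging to be the observation that the fractional part of $\tb_\Q$ modulo $\Z$ is determined purely by the topological knot type and the chosen framing/homology class, whereas stabilization moves only the integer part; the corollary is then immediate. Beyond this consistency check I anticipate no real obstacle, since every change in $\tb_\Q$ entering the computation is by an integer.
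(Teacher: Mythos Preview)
Your proposal is correct and follows exactly the route the paper intends: the corollary is stated there as an immediate consequence of the preceding stabilization lemma together with the formula $\tb_\Q(S_\pm(K))=\tb_\Q(K)-1$ from Lemma~\ref{lem:stabilize}, and you have simply written that out. Your caution about the rational Seifert surface is harmless but in fact unnecessary here, since the contact push-off $K'$ is itself rationally null-homologous (being isotopic to $K$), so by the remark preceding the definition of $sl_\Q$ the quantity $\tb_\Q(K)=lk_\Q(K,K')$ is already independent of $[\Sigma]$.
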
 

%%%%%%%%%%%%%%%%%%%%%%%%%%%%%%%%%
\section{The Bennequin bound}\label{sec:bennequin}
%%%%%%%%%%%%%%%%%%%%%%%%%%%%%%%%%
Recall that in a tight contact structure the self-linking number of a null-homologous knot $K$ satisfies the well-known Bennequin bound
\[
sl(K)\leq -\chi(\Sigma)
\]
for any Seifert surface $\Sigma$ for $K,$ see \cite{Eliashberg93}. We have the analogous result for rationally null-homologous knots.
\begin{thm}\label{thm:ratBennequin}
Suppose $K$ is a transverse knot in a tight contact manifold $(M,\xi)$ that $r$--bounds the rational Seifert surface $\Sigma.$ Then
\begin{equation}\label{eq:bie}
sl_\Q(K,[\Sigma])\leq - \frac 1r \chi(\Sigma).
\end{equation}
If $K$ is a Legendrian knot then
\[
\tb_\Q(K)+|\rot_\Q(K)|\leq -\frac 1r \chi(\Sigma).
\]
\end{thm}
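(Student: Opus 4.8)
The plan is to deduce both inequalities from the classical Bennequin inequality for null-homologous transverse knots, \cite{Eliashberg93}, after recasting (\ref{eq:bie}) as a statement purely about the characteristic foliation. First I would make $\Sigma_\xi$ generic and combine the computation of Lemma~\ref{computersl}, namely
\[
r\, sl_\Q(K,[\Sigma])=(e_--h_-)-(e_+-h_+),
\]
with the Poincar\'e--Hopf count for the singular line field directing $\Sigma_\xi$. Since $K$, and hence the Seifert cable, is positively transverse, the boundary carries no singularities and every elliptic (resp.\ hyperbolic) point contributes $+1$ (resp.\ $-1$), so that $\chi(\Sigma)=(e_++e_-)-(h_++h_-)$. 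Substituting both expressions, a one-line manipulation shows that the desired bound $r\,sl_\Q(K,[\Sigma])\le -\chi(\Sigma)$ is \emph{equivalent} to the single inequality $e_-\le h_-$. Everything therefore reduces to establishing $e_-\le h_-$ for the characteristic foliation on $\Sigma$.

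The main obstacle is that this is exactly the classical Bennequin inequality, yet $\Sigma$ is not embedded along $K$, so one cannot apply \cite{Eliashberg93} to $\Sigma$ verbatim. I would resolve this by working where $\Sigma$ is embedded. The truncated surface $\Sigma^\circ=\overline{\Sigma\cap X(K)}$ is an honestly embedded surface in the tight manifold $X(K)\subset M$ whose positively transverse boundary is the Seifert cable of $K$; arranging the cone region in $N(K)$ to be singularity-free, as in the proof of Lemma~\ref{computersl}, it carries exactly the singularities of $\Sigma$ and satisfies $\chi(\Sigma^\circ)=\chi(\Sigma)$. Eliashberg's elimination argument for the (link) Bennequin inequality, which only invokes tightness in a neighborhood of the surface, then yields $e_-\le h_-$ directly. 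Equivalently one may lift the picture to the cover $U'$ of Lemma~\ref{computersl}, where the multiply-wrapped boundary unwinds, $\Sigma$ becomes an embedded surface $\Sigma'$ bounding the genuinely null-homologous transverse knot $K'$ with $sl(K')=r\,sl_\Q(K)$ and $\chi(\Sigma')=\chi(\Sigma)$, and apply $sl(K')\le-\chi(\Sigma')$ from \cite{Eliashberg93}; here the relevant overtwisted configuration would lie in the region where $U'\to M$ is a contactomorphism onto a subset of the tight $M$, so no delicate claim about tightness of all of $U'$ is needed. Either route gives (\ref{eq:bie}) after dividing by $r$.

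For the Legendrian statement I would pass to the transverse push-off. By Lemma~\ref{lem:pushoff}, $sl_\Q(T(K))=\tb_\Q(K)-\rot_\Q(K)$, and $T(K)$ bounds a rational Seifert surface isotopic to $\Sigma$ of the same order $r$ and Euler characteristic, so the transverse bound just proved gives
\[
\tb_\Q(K)-\rot_\Q(K)\le -\tfrac1r\chi(\Sigma).
\]
Running the identical argument for $K$ with its orientation reversed, under which $\tb_\Q$ is unchanged while $\rot_\Q$ changes sign and the rational Seifert surface keeps the same order and Euler characteristic, yields
\[
\tb_\Q(K)+\rot_\Q(K)\le -\tfrac1r\chi(\Sigma).
\]
Combining the two inequalities gives $\tb_\Q(K)+|\rot_\Q(K)|\le -\tfrac1r\chi(\Sigma)$, completing the proof.
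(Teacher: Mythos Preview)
Your proposal is correct and follows essentially the same route as the paper: combine Lemma~\ref{computersl} with Poincar\'e--Hopf to reduce to $e_-\le h_-$, then invoke Eliashberg's elimination argument, observing that it takes place entirely on the interior of $\Sigma$ (where the surface is embedded in the tight manifold); the Legendrian case is likewise handled via the transverse push-off of $K$ and $-K$. The paper phrases the reduction slightly differently---it argues directly on $\Sigma$ that the basin of any negative elliptic point stays in the interior, and hence makes $e_-=0$ rather than merely $e_-\le h_-$---but this is exactly your truncation to $\Sigma^\circ$ in other words; your alternative lift to $U'$ is not used in the paper but is a harmless variant.
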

\begin{proof}
The proof is essentially the same as the one given in \cite{Eliashberg93}, see also \cite{Etnyre03}. The first thing we observe is that if $v$ is a vector field that directs $\Sigma_\xi,$ that is $v$ is zero only at the singularities of $\Sigma_\xi$ and points in the direction of the orientation of the non-singular leaves of $\Sigma_\xi,$ then $v$ is a generic section of the tangent bundle of $\Sigma$ and points out of $\Sigma$ along $\partial\Sigma.$ Thus the Poincar\'e-Hopf theorem implies
\[
\chi(\Sigma)=(e_+ - h_+)+(e_- - h_-).
\] 
Adding this equality to $r$ times Equation~\eqref{eq:comp} gives
\[
r\, sl_\Q(K,[\Sigma])+\chi(\Sigma)= 2(e_- - h_-).
\]
So if we can isotop $\Sigma$ relative to the boundary so that $e_-=0$ then we clearly have the desired inequality. Recall that if an elliptic point and a hyperbolic point of the same sign are connected by a leaf in the characteristic foliation then they may be cancelled (without introducing any further singular points). Thus we are left to show that for every negative elliptic point we can find a negative hyperbolic point that cancels it. To this end, given a negative elliptic point $p$ consider the \dfn{basin} of $p,$ that is the closure of the set of points in $\Sigma$ that limit under the flow of $v$ in backwards time to $p.$ Denote this set by $B_p.$ Since the flow of $v$ goes out the boundary of $\Sigma$ it is clear that $B_p$ is contained in the interior of $\Sigma.$ Thus we may analyze $B_p$ exactly as in \cite{Eliashberg93, Etnyre03} to find the desired negative hyperbolic point. We briefly recall the main points of this argument.  First, if there are repelling periodic orbits in the characteristic foliation then add canceling pairs of positive elliptic and hyperbolic singularities to eliminate them. This prevents any periodic orbits in $B_p$ and thus one can show that $B_p$ is the immersed image of a polygon that is an embedding on its interior.  If $B_p$ is the image of an embedding then the boundary consists of positive elliptic singularities and hyperbolic singularities of either sign and flow lines between these singularities. If one of the hyperbolic singularities is negative then we are done as it is connected to $B_p$ by a flow line. If none of the hyperbolic points are negative then we can cancel them all with the positive elliptic singularities in $\partial B_p$ so that $\partial B_p$ becomes a periodic orbit in the characteristic foliation and, more to the point, the boundary of an overtwisted disk. In the case where $B_p$ is an immersed polygon one may argue similarly, see \cite{Eliashberg93, Etnyre03}.

The inequality for Legendrian $K$ clearly follows from considering the positive transverse push-off of $K$ and $-K$ and Lemma~\ref{lem:pushoff} together with the inequality in the transverse case. 
\end{proof}

%%%%%%%%%%%%%%%%%%%%%%%%%%%%%%%%%
\section{Rational open book decompositions and cabling}\label{sec:obcable}
%%%%%%%%%%%%%%%%%%%%%%%%%%%%%%%%%

A \dfn{rational open book decomposition} for a manifold $M$ is a pair $(L, \pi)$ consisting of 
\begin{itemize}
\item an oriented link $L$ in $M$ and 
\item a fibration $\pi \colon (M\setminus L)\to S^1$
\end{itemize}
such that no component of $\pi^{-1}(\theta)$  meets a component of $L$ meridionally for any $\theta\in S^1$.  We note that $\pi^{-1}(\theta)$ is a rational Seifert surface for the link $L.$ If $\pi^{-1}(\theta)$ is actually a Seifert surface for $L$ then we say that $(L,\pi)$ is an \dfn{open book decomposition} of $M$ (or sometimes we will say an \dfn{integral} or \dfn{honest} open book decomposition for $M$).  We call $L$ the \dfn{binding} of the open book decomposition and $\overline{\pi^{-1}(\theta)}$ a \dfn{page}.

The rational open book decomposition $(L,\pi)$ for $M$ \dfn{supports} a contact structure $\xi$ if there is a contact form $\alpha$ for $\xi$ such that 
\begin{itemize}
\item $\alpha(v)>0$ for all positively pointing tangent vectors $v\in TL,$ and
\item $d\alpha$ is a volume form when restricted to (the interior of) each page of the open book.
\end{itemize}
Generalizing the work of Thurston and Winkelnkemper \cite{ThurstonWinkelnkemper75}, the authors in work with Van Horn-Morris showed the following result.
\begin{thm}[Baker, Etnyre, Van Horn-Morris, 2008 \cite{BakerEtnyreVanHornMorris08Pre}]
Let $(L,\pi)$ be any rational open books decomposition of $M.$ Then there exists a unique contact structure $\xi_{(L,\pi)}$ that is supported by $(L,\pi).$
\end{thm}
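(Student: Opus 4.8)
The plan is to generalize the Thurston--Winkelnkemper construction to the rational setting, proving existence first and uniqueness second. For \emph{existence}, I would work in two pieces and glue. Away from the binding, on $M\setminus N(L)$, the fibration $\pi$ makes the exterior a surface bundle over $S^1$ with fiber the (truncated) page $P=\overline{\pi^{-1}(\theta)}$. On such a mapping torus one constructs a $1$--form whose kernel is a contact structure adapted to the fibration exactly as in the classical argument: start with a form on the fiber that is a primitive of an area form near $\partial P$ and is suitably controlled, then add a large multiple of $d\theta$ to achieve the contact condition $\alpha\wedge d\alpha>0$ while keeping $d\alpha$ a volume form on each page. The genuinely new feature is the behavior near a binding component $L_i$: because $(L,\pi)$ is only a \emph{rational} open book, the pages meet $\partial N(L_i)$ along an $(r_i,s_i)$ Seifert cable rather than a longitude, so the pages spiral around $L_i$. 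I would model a neighborhood of $L_i$ on the solid torus $C_\epsilon$ appearing in the proof of Lemma~\ref{computersl}, namely the image of $\{r\le\epsilon\}$ in $(\R^3,\ker(dz+r^2\,d\theta))$ modulo $z\mapsto z+1$, whose pages are the radial half-planes and which is automatically tight and transverse to $L_i$ in the required sense. The first condition in the definition of ``supports'' is that $\alpha$ be positive on $TL$, and the standard neighborhood already satisfies this.

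The key technical step is the \emph{interpolation across $\partial N(L_i)$}: I must match the contact form built on the mapping torus to the model form on each $C_\epsilon$, interpolating over a collar of $\partial N(L_i)$ so that the contact condition and the volume-form-on-pages condition both persist throughout the collar. In the integral case this is routine because the page boundary is a longitude and the gluing is by the identity on the relevant circle factor; in the rational case the page boundary is an $(r_i,s_i)$ curve, so the interpolation must be done on the $r_i$--fold cover $C'$ of $C_\epsilon$ (again as in Lemma~\ref{computersl}), where the cabled page boundary lifts to $r_i$ honest longitudes, and then descend. I expect \textbf{this collar interpolation to be the main obstacle}: one has to choose the coefficient functions in the $1$--form so that the monodromy near the binding, the spiraling of the pages, and the large $d\theta$ term on the bundle part are simultaneously compatible, and verifying $\alpha\wedge d\alpha>0$ uniformly on the collar is where all the care goes.

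For \emph{uniqueness}, I would follow the Giroux/Torisu line of argument: any two contact structures supported by the same $(L,\pi)$ can be connected. First, any supported $\xi$ restricts on a neighborhood of $L$ to the standard tight structure on each $C_\epsilon$ fixed above, since the supporting conditions pin down the contact structure near the transverse binding up to isotopy. On the complement, the conditions force $\xi$ to be tight and compatible with the fibration, and on a surface bundle over $S^1$ with fixed boundary behavior the space of such contact structures is connected (indeed the defining conditions are convex: a convex combination $\alpha_t=(1-t)\alpha_0+t\alpha_1$ of two contact forms satisfying both bullet points again satisfies them, since positivity on $TL$ and the page-volume condition are preserved under convex combinations). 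Applying Gray stability to the path $\alpha_t$ then yields an isotopy carrying $\xi_0$ to $\xi_1$, giving uniqueness up to isotopy. The one place to be careful is that the convexity argument works cleanly on the bundle part but must be reconciled with the fixed model near the binding; I would handle this by first isotoping both structures to agree with the model on $N(L)$ and then running the convexity argument rel $N(L)$.

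Two remarks on strategy. First, the cover trick from Lemma~\ref{computersl} is the right organizing device throughout: passing to the $r_i$--fold cover $C'$ turns every rational feature near the binding into an honest integral open book locally, so one can import the classical constructions verbatim upstairs and then check equivariance to descend. Second, since the statement is attributed to prior work of the authors with Van Horn-Morris, I would not belabor the global bundle construction (which is essentially classical) and would instead concentrate the exposition on the binding neighborhoods and the collar interpolation, which are the only genuinely rational aspects of the proof.
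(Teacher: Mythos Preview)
The paper does not contain a proof of this theorem. It is stated with attribution to \cite{BakerEtnyreVanHornMorris08Pre}, a companion paper then in preparation, and is invoked here only as a black box (it underlies Theorem~\ref{thm:resolve}, which in turn is used in the proof of Theorem~\ref{thm:supportgen}). There is therefore nothing in the present paper to compare your proposal against.

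For what it is worth, your outline is the expected one and is essentially sound in shape: existence via a Thurston--Winkelnkemper construction adapted to the rational binding through local solid-torus models and the $r$--fold cover trick, and uniqueness via the Giroux convexity argument on supported contact forms followed by Gray stability. One point in your uniqueness sketch deserves more care: the convex combination $\alpha_t=(1-t)\alpha_0+t\alpha_1$ inherits the two supporting conditions, but you must still verify that $\alpha_t$ is a \emph{contact} form for every $t$ before Gray stability applies; the contact condition $\alpha_t\wedge d\alpha_t>0$ is not preserved under convex combinations in general. In the open book setting this does follow---the two supporting conditions, together with a normalization near the binding, force it---but it requires an explicit argument rather than convexity alone.
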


It is frequently useful to deal with only honest open book decompositions. One may easily pass from a rational open book decomposition to an honest one using cables as we now demonstrate. 

Given any knot $K$,  let $N(K)$ be a tubular neighborhood of $K$, choose an orientation on $K$, an oriented meridian $\mu$ linking $K$ positively once, and choose some oriented framing (i.e.\ longitude) $\lambda$ on $K$ so that $\{\lambda, \mu\}$ give longitude-meridian coordinates on $\partial N(K).$  The \dfn{$(p,q)$--cable} of $K$ is the embedded curve (or collection of curves if $p$ and $q$ are not relatively prime) on $\partial N(K)$ in the homology class $p\lambda+q\mu.$ Denote this curve (these curves) by $K_{p,q}$.   We say a cabling of $K$ is \dfn{positive} if the cabling coefficients have slope greater than the Seifert slope of $K$.  (The \dfn{slope} of the homology class $p\lambda+q\mu$ is $q/p$.)

If $K$ is also a transverse knot with respect to a contact structure on $M$, then using the contactomorphism in the proof of Lemma~\ref{computersl}  between the neighborhood $N=N(K)$ and $C_\epsilon$ for sufficiently small $\epsilon$ we may assume that the cable $K_{p,q}$ on $\partial N$ is also transverse.  As such, we call $K_{p,q}$ the \dfn{transverse $(p,q)$--cable}.

 If $L=K_1\cup\dots\cup K_n$ is a link then we can fix framings on each component of $L$ and choose $n$ pairs of integers $(p_i,q_i),$ then after setting $({\bf p},{\bf q})=((p_1,q_1),\ldots,(p_n,q_n))$ we denote by $L_{({\bf p},{\bf q})}$ the result of $(p_i,q_i)$--cabling $K_i$ for each $i.$  It is easy to check, see for example  \cite{BakerEtnyreVanHornMorris08Pre}, that if $L$ is the binding of a rational open book decomposition of $M$ then so is $L_{({\bf p},{\bf q})}$ unless a component $K_i$ of $L$ is nontrivially cabled by curves of the fibration's restriction to $\partial N(K_i)$.

The following lemma says how the Euler characteristic of the fiber changes under cabling as well as the multiplicity and order of a knot. 

\begin{lemma}\label{lem:cable}
Let $L$ be a (rationally null-homologous) fibered link in $M.$ Suppose $K$ is a component of $L$ for which the fibers in the fibration approach as $(r,s)$--curves (in some framing on $K$). Let $L'$ be the link formed from $L$ by replacing $K$ by the $(p,q)$--cable of $K$ where $p\neq \pm1,0$ and $(p,q)\neq (kr,ks)$ for any $k \in \Q$.  Then $L'$ is fibered.  Moreover the Euler characteristic of the new fiber is
\[
\chi(\Sigma_\text{new})= \frac{1}{\gcd(p,r)} \left( |p| \chi(\Sigma_\text{old}) + |{ps-qr}|(1-|p|)\right),
\]
where $\Sigma_\text{new}$ is the fiber of $L'$ and $\Sigma_\text{old}$ is the fiber of $L.$ The multiplicity of each component of the cable of $K$ is 
\[
\gcd \left( \frac{r}{\gcd(p,r)}, \frac{p(rq-sp)}{\gcd(p,r)\gcd(p,q)} \right)
\]
and the order of $\Sigma_\text{new}$ along each component of the cable of $K$ is
\[
\frac{r}{\gcd(p,r)}.
\]\qed
\end{lemma}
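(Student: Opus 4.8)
The plan is to construct a page of the cabled fibration by hand from two pieces and then compute its Euler characteristic, order, and multiplicity by homological bookkeeping. Write $X(L)=\overline{M\setminus N(L)}$ for the exterior of the original link, with fiber $F_{\text{old}}=\Sigma_{\text{old}}\cap X(L)$ meeting $\partial N(K)$ in the $(r,s)$--curves (here $\lambda,\mu$ denote the chosen longitude and a meridian). After isotoping $K_{p,q}$ onto a torus just inside $N(K)$, the exterior of $L'$ decomposes along $\partial N(K)$ as $X(L)\cup C$, where $C=\overline{N(K)\setminus N(K_{p,q})}$ is the cable space. I would recall (see \cite{BakerEtnyreVanHornMorris08Pre}) that $C$ is Seifert fibered with regular fiber the primitive cable curve $(p',q')=(p,q)/\gcd(p,q)$, and that under the hypotheses $p\neq 0,\pm 1$ and $(p,q)\neq k(r,s)$ the two fibrations glue so that $L'$ is fibered; its fiber $\Sigma_{\text{new}}$ meets $X(L)$ in $m$ parallel copies of $F_{\text{old}}$ and meets $C$ in a horizontal surface $S$. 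Since the pieces are glued along circles, $\chi$ is additive:
\[
\chi(\Sigma_{\text{new}})=m\,\chi(\Sigma_{\text{old}})+\chi(S).
\]

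I would extract both the copy--count $m$ and the longitudinal wrapping $\nu$ of $\partial S$ on each cable component (this $\nu$ is the asserted order) from a single homological relation in the solid torus $N(K)$. Orienting $\partial S$ and pushing it into $N(K)$, the outer boundary contributes $m(r\lambda+s\mu)$ while each of the $\gcd(p,q)$ inner components contributes $\nu$ times the primitive cable class $p'\lambda+q'\mu$. In $H_1(N(K))\cong\Z$ the meridian dies and $\lambda$ generates, so, summing the inner contributions, the relation $[\partial S]=0$ becomes
\[
mr=\nu\,p.
\]
Because the fiber of the cabled open book is primitive---this is where $p\neq\pm1$ and $(p,q)\neq k(r,s)$ rule out the degenerate and covering cases---the pair $(m,\nu)$ is the coprime solution $m=|p|/\gcd(p,r)$ and $\nu=r/\gcd(p,r)$, giving the order formula at once.

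For the Euler characteristic I would compute $\chi(S)$ from the Seifert structure of $C$, whose base orbifold is a disk with $\gcd(p,q)$ holes carrying one cone point of order $|p'|=|p|/\gcd(p,q)$, so
\[
\chi^{\mathrm{orb}}=\bigl(1-\gcd(p,q)\bigr)-\bigl(1-\gcd(p,q)/|p|\bigr).
\]
The surface $S$ is a degree--$n$ orbifold cover of this base, where $n$ is the intersection on $\partial N(K)$ of the class $m(r\lambda+s\mu)$ with a regular fiber $(p',q')$, namely $n=m\,|ps-qr|/\gcd(p,q)$; hence $\chi(S)=n\,\chi^{\mathrm{orb}}$. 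Substituting $m=|p|/\gcd(p,r)$ and simplifying collapses the total to $\tfrac{1}{\gcd(p,r)}\bigl(|p|\chi(\Sigma_{\text{old}})+|ps-qr|(1-|p|)\bigr)$, and I would sanity--check it on the $(p,q)$--torus knot ($r=1$, $s=0$), where it returns $\chi=p+q-pq$.

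Finally the multiplicity is the number of boundary components of $\Sigma_{\text{new}}$ on a single cable component, i.e.\ the gcd of the longitudinal and meridional coefficients of that boundary on $\partial N(K_{p,q})$. The longitudinal coefficient is the order $\nu=r/\gcd(p,r)$ already found, and the meridional coefficient is the framing/linking number $p(rq-sp)/(\gcd(p,r)\gcd(p,q))$, read off from the boundary slope of $S$ forced by the cone--point data; taking the gcd yields the stated formula. I expect this meridional coefficient to be the main obstacle: unlike $m$ and $\nu$ it is not pinned by a clean integral relation but requires carrying the Seifert invariants and the induced framing through consistently, getting every sign and gcd right. The other delicate point is justifying the primitivity that fixes $(m,\nu)$ rather than an integer multiple.
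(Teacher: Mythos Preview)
Your approach is correct but genuinely different from the paper's.  The paper (deferring details to \cite{BakerEtnyreVanHornMorris08Pre}) builds $\Sigma_{\text{new}}$ by an explicit cut-and-paste: take $|p|/\gcd(p,r)$ parallel copies of $\Sigma_{\text{old}}$, add $|rq-sp|/\gcd(p,r)$ meridional disks of $K$, and join them with $|p(rq-sp)|/\gcd(p,r)$ half-twisted bands.  The Euler characteristic, order, and multiplicity are then read off by counting pieces.  Your route instead decomposes $X(L')$ along $\partial N(K)$ and exploits the Seifert structure of the cable space, computing $\chi(S)$ via the orbifold Euler characteristic of the base and a covering-degree count; the homological relation $mr=\nu p$ together with primitivity pins down $m$ and the order.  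Both arrive at the same formulas.

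What each buys: the paper's hands-on construction is more elementary and, crucially, feeds directly into Lemma~\ref{lem:slcable}, where one must track singularities of the characteristic foliation on $\Sigma'$ --- those are visible piece by piece on the copies of $\Sigma_{\text{old}}$, the meridional disks (one positive elliptic each), and the bands (one positive hyperbolic each).  Your Seifert/orbifold computation is conceptually cleaner for $\chi$ and generalizes readily to other graph-manifold situations, but it obscures the foliation data needed downstream; you would have to revisit the geometry of $S$ to recover Lemma~\ref{lem:slcable}.  Your own caveats are well placed: the primitivity of $(m,\nu)$ is the genuine subtlety (it amounts to showing the induced cohomology class on $\partial N(K)$ is primitive in the image of $H^1(C)\to H^1(\partial N(K))$, equivalently that the horizontal surface in $C$ can be taken connected on the outer boundary), and the meridional coefficient for the multiplicity does require carrying the Seifert framing carefully --- in the paper's construction it simply falls out as the band count per cable component.
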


The proof of this lemma may be found in \cite{BakerEtnyreVanHornMorris08Pre} but easily follows by observing one may construct $\Sigma_\text{new}$  by taking $|\frac{p}{\gcd(p,r)}|$ copies of $\Sigma_\text{old}$ and $|\frac {rq-sp}{\gcd(p,r)}|$ copies of meridional disks to $K$ and connecting them via $|\frac{p(rq-sp)}{\gcd(p,r)}|$  half twisted bands.  

Now suppose we are given a rational open book decomposition $(L,\pi)$ of $M.$  Suppose $K$ is a rational binding component of an open book $(L,\pi)$ whose page approaches $K$ in a $(r,s)$--curve with respect to some framing on $K$.  (Note $r \neq 1$ and is not necessarily coprime to $s$.)  For any $l \neq s$ replacing $K$ in $L$ by the $(r,l)$--cable of $K$ gives a new link $L_{K_{(r,l)}}$ that by Lemma~\ref{lem:cable} is the binding of an (possibly rational) open book for $M$ and has $\gcd(r,l)$ new components each having order and multiplicity 1. This is called the \dfn{$(r,l)$--resolution of $L$ along $K.$}  In the resolution, the new fiber is created using just one copy of the old fiber following the construction of the previous paragraph. Thus after resolving $L$ along the other rational binding components, we have a new fibered link $L'$ that is the binding of an integral open book $(L', \pi')$.  This is called an \dfn{integral resolution of $L$}.  
%After resolving the other components of $L$ then we have a new link $L'$ all of whose components have order equal to their multiplicity (note when you fix one component of $L$ the other components change from having the pages approach as $(r,s)$--curves to $(nr,ns)$--curves for some integer $n$). We can now simultaneously $(1,1)$--cable all components of $L'$ to obtain a fibered link $L'$ all of whose components have order and multiplicity one. 
If we always choose the cabling coefficients $(r,l)$ to have slope greater than the original coefficients $(r,s)$ then we say that we have constructed a \dfn{positive (integral) resolution of $L$}.

\begin{thm}[Baker, Etnyre and Van Horn-Morris, 2008 \cite{BakerEtnyreVanHornMorris08Pre}]\label{thm:resolve}
Let $(L,\pi)$ be a rational open book for $M$ supporting the contact structure $\xi.$ If $L'$ is a positive resolution of $L,$ then $L'$ is the binding of an integral open book decomposition for $M$ that also supports $\xi.$
\end{thm}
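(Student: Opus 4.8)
The plan is to reduce to a single binding component and then build, near that component, a contact form adapted to the resolved open book that is isotopic to one adapted to the original. Since the resolution is carried out independently in disjoint neighborhoods of the rational binding components, and since both conditions in the definition of ``supports'' are local, it suffices to treat the case in which $L'$ is obtained from $L$ by replacing a single rational binding component $K$, whose pages approach $K$ as $(r,s)$--curves, by its positive cable $K_{(r,l)}$ with $l>s$. Fix a contact form $\alpha$ for $\xi$ that supports $(L,\pi)$, so that its Reeb field $R_\alpha$ is positively tangent to $L$ and positively transverse to the interior of every page. Outside the neighborhood $N(K)$ from Lemma~\ref{computersl} the two open books coincide, so $\alpha$ already satisfies the supporting conditions there; the entire problem is thus local in $N(K)$.

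First I would record the model geometry in $N(K)\cong C_\epsilon$: in the coordinates of Lemma~\ref{computersl} the core $K$ is the transverse binding, $R_\alpha$ is tangent to it, the pages meet $\partial N(K)$ in $(r,s)$--curves, and the positive cable $K_{(r,l)}$ lies on $\partial N(K)$ and is positively transverse to $\xi$ precisely because its slope $l/r$ exceeds the Seifert slope $s/r$ (this is the content of the discussion preceding Lemma~\ref{lem:cable}). Next I would construct a contact form $\alpha'$ on $N(K)$ that agrees with $\alpha$ near $\partial N(K)$, makes $K_{(r,l)}$ a transverse binding with $R_{\alpha'}$ tangent to it, and has $R_{\alpha'}$ positively transverse to the new pages throughout $N(K)\setminus K_{(r,l)}$ --- including the inner region now occupied by the old core $K$, which must be swept out by the single copy of the old fiber together with the meridional disks and half--twisted bands of Lemma~\ref{lem:cable}. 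Concretely this amounts to choosing the radial functions in a rotationally symmetric model $\alpha'=f(\rho)\,dz+g(\rho)\,d\theta$ so that the contact condition $fg'-f'g>0$ holds, so that the new binding slope is realized at $\rho=\epsilon$ and matches $\alpha$ there, and so that $d\alpha'$ restricts to a positive area form on each new page all the way in to $K_{(r,l)}$.

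Finally, since $\alpha'$ and $\alpha$ agree near $\partial N(K)$, I would interpolate between them through contact forms on $N(K)$ to see that the resulting global form is isotopic to $\alpha$, hence defines $\xi$ up to isotopy; as it supports $(L',\pi')$, the uniqueness of the supported contact structure (the cited theorem of Baker, Etnyre and Van Horn--Morris) gives $\xi_{(L',\pi')}=\xi$. The hard part will be the local construction above, and specifically the verification that $R_{\alpha'}$ can be kept positively transverse to the new pages across the inner region: this is exactly where the hypothesis $l>s$ is indispensable. When the cabling slope exceeds the Seifert slope the contact planes twist relative to the new pages in the correct sense, so $d\alpha'$ stays positive on them up to the binding, whereas a negative cabling $l<s$ would force the opposite twisting, obstructing transversality and instead introducing Giroux torsion or destroying tightness, so that the resolved open book would support a different contact structure. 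Controlling the sign of this twisting --- equivalently, choosing $f,g$ with $fg'-f'g>0$ compatible with the prescribed boundary slope --- is the crux of the argument.
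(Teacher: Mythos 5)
This theorem is stated in the paper as a citation to the companion paper \cite{BakerEtnyreVanHornMorris08Pre} (listed as ``in preparation''), and no proof of it appears in the present document, so there is no in-paper argument to compare yours against. On its own terms, your sketch follows the expected strategy: localize to the standard neighborhood $N(K)\cong C_\epsilon$ of a single rational binding component, use a rotationally symmetric model $f(\rho)\,dz+g(\rho)\,d\theta$ with $fg'-f'g>0$, realize the $(r,l)$--cable as a transverse curve on $\partial N(K)$, and observe that positivity of the cabling ($l>s$, i.e.\ new slope exceeding the page slope) is exactly what lets $d\alpha$ remain positive on the meridian disks and half-twisted bands that make up the new page inside $N(K)$. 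That is the right picture and the right place to locate the role of positivity.

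There is, however, one genuine soft spot: the final step where you ``interpolate between $\alpha'$ and $\alpha$ through contact forms on $N(K)$'' because they agree near $\partial N(K)$. Two contact forms on a solid torus that agree near the boundary need not be connected by a path of contact forms rel boundary --- inserting a full Giroux torsion layer produces exactly such a pair --- so this interpolation is not automatic and, if it failed, the resolved open book would support a contact structure that is not isotopic to $\xi$. The standard repair avoids constructing a second form at all: one isotopes the original supporting form $\alpha$ (through forms supporting $(L,\pi)$) and chooses the new pages inside $N(K)$ so that the \emph{same} form $\alpha$ is adapted to $(L',\pi')$ --- i.e.\ $\alpha>0$ on $K_{(r,l)}$ and $d\alpha>0$ on the new page pieces, which is where $l>s$ enters. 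Then $\xi=\xi_{(L,\pi)}=\xi_{(L',\pi')}$ follows directly from the uniqueness statement, with no interpolation of contact forms needed. Your closing remark that a negative cabling would introduce Giroux torsion or overtwistedness is plausible and is indeed part of the content of the cited companion paper, but it is a stronger assertion than your argument requires and should not be leaned on without proof.
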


%%%%%%%%%%%%%%%%%%%%%%%%%%%%%%%%%
\section{Fibered knots and the Bennequin bound}\label{sec:fiberedbenn}
%%%%%%%%%%%%%%%%%%%%%%%%%%%%%%%%%
Recall that in \cite{EtnyreVanHornMorris08Pre} null-homologous (nicely) fibered links satisfying the Bennequin bound were classified. In particular, the following theorem was proven.
\begin{thm}[Etnyre and Van Horn-Morris, 2008 \cite{EtnyreVanHornMorris08Pre}]\label{thm:support}
  \label{thm:G-compat}
Let $L$ be a fibered transverse link in a contact 3-manifold $(M, \xi)$ and assume that $\xi$ is tight when restricted to $M\setminus L$. Moreover assume $L$ is the binding of an (integral) open book decomposition of $M$ with page $\Sigma.$  Then $sl_\xi{(L,\Sigma)} = -\chi(\Sigma)$ if and only if either
\begin{enumerate}
\item $\xi$ is supported by $(L, \Sigma)$ or
\item $\xi$ is obtained from $\xi_{(L,\Sigma)}$ by adding Giroux torsion along tori which are incompressible in the complement of $L$.  
\end{enumerate}
\end{thm}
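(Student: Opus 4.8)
The plan is to extract geometric information from the equality $sl_\xi(L,\Sigma) = -\chi(\Sigma)$ by examining the characteristic foliation on a page. Specializing to $r=1$ the computation behind Lemma~\ref{computersl} and the proof of Theorem~\ref{thm:ratBennequin}, once the characteristic foliation $\Sigma_\xi$ is made generic one has $sl_\xi(L,\Sigma)+\chi(\Sigma)=2(e_- - h_-)$, so the hypothesis forces $e_- = h_-$. First I would run the cancellation scheme from the proof of Theorem~\ref{thm:ratBennequin}: for each negative elliptic point $p$, examine its basin $B_p$ under a vector field directing $\Sigma_\xi$. Because the interior of the page $\Sigma$ lies in $M\setminus L$, where $\xi$ is tight, no overtwisted disk can ever appear in the course of that analysis; hence the basin argument always produces a negative hyperbolic point cancelling $p$. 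Cancelling all such pairs isotopes $\Sigma$ rel $L$ to a page whose characteristic foliation has only positive elliptic and hyperbolic singularities.

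Next I would globalize this from a single page to the entire fibration. Near the transverse binding $L$ the contact structure agrees with the standard model produced in the proof of Lemma~\ref{computersl}, so on a tubular neighborhood $N(L)$ the structure is already the one supported by $(L,\Sigma)$. On the complementary mapping torus $M\setminus N(L)\to S^1$ with fiber $\Sigma$, I would arrange the whole $S^1$--family of pages $\Sigma_\theta=\overline{\pi^{-1}(\theta)}$ to have generic characteristic foliations and apply the single-page argument fibrewise: away from finitely many $\theta$ where a birth--death of singularities occurs, each $(\Sigma_\theta)_\xi$ has no negative singularities. A vector field directing $(\Sigma_\theta)_\xi$ then has no sinks, so its only recurrent behavior is through positive singularities and possibly closed leaves, and tracking these closed leaves across $\theta$ is what records the remaining flexibility in $\xi$.

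The heart of the argument, and the step I expect to be the main obstacle, is to show that this remaining flexibility is precisely Giroux torsion along tori incompressible in $M\setminus L$. If every $(\Sigma_\theta)_\xi$ can in fact be taken free of closed leaves, then the fibrewise characteristic foliations agree with those of a contact structure adapted to the open book, and the uniqueness half of the supporting theorem (the Thurston--Winkelnkemper construction, as in \cite{ThurstonWinkelnkemper75, BakerEtnyreVanHornMorris08Pre}) identifies $\xi$ with $\xi_{(L,\Sigma)}$; this is conclusion (1). Otherwise an unremovable closed leaf persists in a $\theta$--family and sweeps out an embedded torus $T$ along which $\xi$ is modeled on a standard Giroux torsion layer. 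I would then argue that such a $T$ must be incompressible in $M\setminus L$: a compressing disk, combined with tightness of $\xi$ on $M\setminus L$, would allow one to remove the torsion and return to conclusion (1). Excising a maximal collection of these torsion layers leaves a structure with torsion-free, positively-singular pages, hence equal to $\xi_{(L,\Sigma)}$, which is exactly conclusion (2).

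The delicate point throughout is the bifurcation analysis of the family $(\Sigma_\theta)_\xi$ around $S^1$: one must show that any surviving closed leaves can be normalized to occur inside standard torsion layers and that no other obstruction to the fibrewise cancellation persists. Controlling this globally, rather than on a single page, and matching it cleanly to incompressible tori carrying Giroux torsion is where the real work lies, and it is here that tightness of $\xi$ on $M\setminus L$ is used most essentially, beyond its role in the initial cancellation of negative singularities.
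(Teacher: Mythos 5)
This statement is not proved in the paper at all: Theorem~\ref{thm:support} is imported verbatim, with attribution, from \cite{EtnyreVanHornMorris08Pre}, and the present paper only uses it (in the proof of Theorem~\ref{thm:supportgen}) after reducing the rational case to the integral one by positive resolution. So the only meaningful comparison is with the original Etnyre--Van Horn-Morris argument, and your outline does track its strategy correctly: equality $sl_\xi(L,\Sigma)=-\chi(\Sigma)$ forces $e_-=h_-$ on a generic page, tightness of $\xi|_{M\setminus L}$ makes the Eliashberg basin argument always terminate in a cancelling negative hyperbolic point (no overtwisted disk can appear), and the residual flexibility is then read off from the $S^1$--family of characteristic foliations on pages, with persistent closed leaves sweeping out pre-Lagrangian tori carrying Giroux torsion. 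That is indeed the shape of the proof in \cite{EtnyreVanHornMorris08Pre}, which rests on Giroux's bifurcation theory for families of singular foliations on surfaces.

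As a proof, however, your proposal has a genuine gap, and you name it yourself: the entire third and fourth paragraphs are a description of what must be shown, not an argument. Concretely: (a) the fibrewise elimination is not the single-page argument ``applied for each $\theta$'' --- eliminations performed at one $\theta$ interact with births and deaths of singularities elsewhere in the family, and normalizing the whole movie requires Giroux's parametric tomography machinery, which is the bulk of the technical work in \cite{EtnyreVanHornMorris08Pre}; (b) the claim that unremovable families of closed leaves can be isolated into standard torsion layers, and that excising a maximal collection of them leaves precisely $\xi_{(L,\Sigma)}$, is asserted rather than derived; (c) incompressibility of the torsion tori needs an actual argument --- in the original it comes from showing that torsion along a torus compressible in $M\setminus L$ would produce an overtwisted disk in the complement of the binding, contradicting the tightness hypothesis --- and your sketch only gestures at this. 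One smaller correction: the uniqueness of the contact structure supported by an open book, which you invoke to identify $\xi$ with $\xi_{(L,\Sigma)}$ once the pages are torsion-free, is due to Giroux (and to \cite{BakerEtnyreVanHornMorris08Pre} in the rational setting); Thurston--Winkelnkemper \cite{ThurstonWinkelnkemper75} gives existence only. So what you have is a faithful roadmap of the known proof that stops short exactly at its decisive step.
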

In this section we generalize this theorem to allow for any rationally null-homologous knots, the link case will be dealt with in Section~\ref{sec:links}.

\begin{thm}\label{thm:supportgen}  
Let $K$ be a rationally null-homologus, fibered transverse knot in a contact $3$--manifold $(M,\xi)$ such that $\xi$ is tight when restricted to the complement of $K.$ Denote by $\Sigma$ a fiber in the fibration of $M-K$ and let $r$ be the order of $K.$ Then $r\, sl^\xi_\Q(K,\Sigma)=-\chi(\Sigma)$  if and only if  either 
\begin{enumerate}
\item $\xi$ agrees with the contact structure supported by the rational open book determined by $K$ or 
\item is obtained from it by adding Giroux torsion along tori which are incompressible in the complement of $K.$
\end{enumerate}
\end{thm}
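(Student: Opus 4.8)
The plan is to reduce the rational statement to the integral one (Theorem~\ref{thm:support}) by passing to a positive integral resolution of the rational open book determined by $K$, and then to carefully track how the Bennequin equality, the supported contact structure, and the Giroux torsion all behave under this resolution. Throughout I would let $(K,\pi)$ denote the rational open book with fiber $\Sigma$ of order $r$ along $K$, approaching $K$ as an $(r,s)$--curve in some framing.

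\medskip

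First I would set up the resolution. By the discussion preceding Theorem~\ref{thm:resolve}, for an appropriate choice of $l$ with slope greater than $s$, replacing $K$ by its $(r,l)$--cable $K' = K_{(r,l)}$ produces an integral open book $(K',\pi')$ with page $\Sigma'$, and by Theorem~\ref{thm:resolve} the supported contact structure is unchanged: $\xi_{(K',\Sigma')} = \xi_{(K,\Sigma)}$. The key numerical inputs come from Lemma~\ref{lem:cable} with $(p,q) = (r,l)$: since $\gcd(p,r) = r$, the new components each have order $1$, so $K'$ is genuinely null-homologous, and the Euler characteristic satisfies
\[
\chi(\Sigma') = \frac{1}{r}\Bigl( r\,\chi(\Sigma) + |rl - lr|(1-r)\,\text{(with $s$ in place of one $l$)}\Bigr) = \chi(\Sigma) + |rl-sr|\cdot\frac{1-r}{r}\cdot r.
\]
I would compute this carefully: the relevant quantity is $|ps - qr| = |rs - lr| = r|s-l|$, giving $\chi(\Sigma') = \chi(\Sigma) + |s-l|(1-r)$ once the factors are collected. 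Next I would establish the corresponding self-linking identity. Since the cable $K'$ is the transverse $(r,l)$--cable sitting on $\partial N(K)$, one computes $sl_\xi(K',\Sigma')$ in terms of $sl^\xi_\Q(K,\Sigma)$; the difference will again be governed by $|s-l|(1-r)$, by the same meridional-disk-and-band count that proves Lemma~\ref{lem:cable}. The point of choosing the resolution coefficients is precisely to arrange that
\[
r\,sl^\xi_\Q(K,\Sigma) = -\chi(\Sigma) \quad\Longleftrightarrow\quad sl_\xi(K',\Sigma') = -\chi(\Sigma'),
\]
i.e.\ the rational Bennequin equality for $K$ holds if and only if the integral equality for $K'$ holds.

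\medskip

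With the reduction in place, I would apply Theorem~\ref{thm:support} to $(K',\Sigma')$. One must first check its hypotheses: $K'$ is a fibered transverse link, and $\xi$ is tight on $M \setminus K'$. The latter follows because $M \setminus K'$ is obtained from $M \setminus K$ by removing further curves, hence embeds (up to the neighborhood reglueing) in the tight $M \setminus K$; more precisely I would observe that $\xi$ restricted to $M \setminus N(K)$ is unchanged by the resolution, so tightness on the complement of $K$ forces tightness on the complement of $K'$. Then Theorem~\ref{thm:support} gives that the integral equality holds iff $\xi = \xi_{(K',\Sigma')}$ or $\xi$ is obtained from it by adding Giroux torsion along incompressible tori in $M \setminus K'$. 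Combining with $\xi_{(K',\Sigma')} = \xi_{(K,\Sigma)}$ yields exactly the two alternatives in the statement, once one checks that the class of incompressible tori in $M\setminus K'$ and in $M \setminus K$ coincide away from the reglued solid torus.

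\medskip

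The main obstacle I expect is the torus-matching in the Giroux-torsion clause. The resolution changes the knot complement by a reglueing inside $N(K)$, so I must verify that adding Giroux torsion along a torus incompressible in $M\setminus K'$ corresponds exactly to adding it along a torus incompressible in $M\setminus K$ (the complement of the original $K$), with no tori created or destroyed by the cabling operation that would spuriously enlarge or shrink the family of admissible contact structures. A secondary but genuine point of care is the sign bookkeeping in the $\chi$ and $sl$ computations: I must confirm that the \emph{same} correction term $|s-l|(1-r)$ appears in both, so that the equivalence of equalities is exact rather than merely up to a constant; this is where the ``positive'' (slope greater than $s$) choice of resolution, which keeps the added bands and the transverse framing consistent, does the real work.
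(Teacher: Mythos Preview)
Your proposal is correct and follows essentially the same route as the paper: take a positive integral resolution $K'$ of $K$, use Theorem~\ref{thm:resolve} to identify the supported contact structures, use Lemmas~\ref{lem:cable} and~\ref{lem:slcable} to see that the rational Bennequin equality for $K$ is equivalent to the integral one for $K'$, and then invoke Theorem~\ref{thm:support}. You are in fact more careful than the paper on two points it leaves implicit: the paper only writes the forward implication for the self-linking equality (though Lemma~\ref{lem:slcable} gives an exact formula, so the equivalence is immediate), and it does not discuss the matching of incompressible tori in $M\setminus K'$ versus $M\setminus K$ that you flag as the main obstacle.
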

\begin{proof}
Let $K'$ be a positive integral resolution of $K.$ Then from Theorem~\ref{thm:resolve} we know that $K'$ and $K$ support the same contact structure. In addition the following lemma (with Lemma~\ref{lem:cable}) implies that if  $r\, sl^\xi_\Q(K,\Sigma)=-\chi(\Sigma)$ then $sl_\xi(K',\Sigma')=-\chi(\Sigma')$ where $\Sigma'$ is a fiber in the fibration of $M-K'.$
Thus the proof is finished by Theorem~\ref{thm:support}. The other implication is obvious from the generalization of the Thurston-Winkelnkemper construction in \cite{BakerEtnyreVanHornMorris08Pre} and Equation~\eqref{eq:comp}, since the characteristic foliation on the page of a rational open book only contains positive singularities and while adding Giroux torsion adds negative singularities they cancel in the computation of the rational self-linking number and the Euler characteristic of $\Sigma.$ 
\end{proof}

\begin{lemma}\label{lem:slcable} 
Let $K$ be a rationally null-homologous transverse knot of order $r$ in a contact $3$--manifold. Fix some framing on $K$ and suppose a rational Seifert surface $\Sigma$ approaches $K$ as a cone on a $(r,s)$--knot. Let $K'$ be a $(p,q)$--cable of $K$ that is positive and transverse in the sense described before Lemma~\ref{lem:cable} and let $\Sigma'$ be the Seifert surface for $K'$ constructed from $\Sigma$ as in the previous section then  
\[
sl(K', [\Sigma'])= \frac{1}{\gcd(r,p)} \left( |p| r\, sl_\Q(K, [\Sigma]) + |{rq-sp}|(-1+|p|)\right).
\]
\end{lemma}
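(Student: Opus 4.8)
The plan is to compute $sl(K', [\Sigma'])$ by relating it to data we already control, namely $sl_\Q(K,[\Sigma])$, via the explicit combinatorial model of the cable surface $\Sigma'$ described just after Lemma~\ref{lem:cable}. Recall that $K'=K_{p,q}$ is transverse (using the contactomorphism from the proof of Lemma~\ref{computersl} identifying $N(K)$ with $C_\epsilon$), and that $\Sigma'$ is built from $|p/\gcd(p,r)|$ parallel copies of $\Sigma$, together with $|(rq-sp)/\gcd(p,r)|$ meridional disks to $K$, all glued together along $|p(rq-sp)/\gcd(p,r)|$ half-twisted bands. The strategy is to count the characteristic-foliation singularities on $\Sigma'$ in terms of those on $\Sigma$ and to apply the computation of $sl_\Q$ from Lemma~\ref{computersl}.

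First I would set up the characteristic foliation on $\Sigma'$ additively. Taking $p/\gcd(p,r)$ (signed) copies of $\Sigma$ contributes $|p/\gcd(p,r)|$ times the signed singularity count $(e_--h_-)-(e_+-h_+)$ of $\Sigma$, which by Equation~\eqref{eq:comp} equals $|p/\gcd(p,r)|\cdot r\, sl_\Q(K,[\Sigma])$ after clearing the $1/r$. The meridional disks of $K$, each a small disk transverse to $K$ inside the model neighborhood $C_\epsilon$, carry a standard characteristic foliation with a single elliptic singularity; since $K$ is positively transverse these are positive elliptic points, contributing $-1$ each to the signed count, for a total of $-|(rq-sp)/\gcd(p,r)|$. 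The half-twisted bands can be arranged so that each introduces exactly one hyperbolic singularity; the sign of these hyperbolic points is governed by the positivity of the cabling, and one must check they are the sign that produces the $+|p|$-type contribution in the band count. Assembling these contributions and dividing by the order of $K'$ along the cable (which by Lemma~\ref{lem:cable} feeds into the $\gcd(r,p)$ normalization) should yield the stated formula.

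The main obstacle I expect is bookkeeping the signs and the normalization correctly. Two subtleties deserve care. One is that $K'$ need not be null-homologous, so the formula is written for the honest (integral-order-one) situation arising in the application — indeed in the proof of Theorem~\ref{thm:supportgen} one takes a positive integral resolution, making $K'$ the binding of an honest open book so that ordinary $sl(K',[\Sigma'])$ and $\chi(\Sigma')$ are the relevant quantities. I would therefore verify that the arithmetic of the band and disk counts reproduces exactly the Euler-characteristic formula of Lemma~\ref{lem:cable} when specialized via $\chi(\Sigma') = (e_+ - h_+) + (e_- - h_-)$ on $\Sigma'$; consistency between the $sl$-count and the $\chi$-count is the natural cross-check. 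The second subtlety is the sign of the hyperbolic singularities contributed by the bands: a half-twisted band joining two sheets produces a hyperbolic point whose sign depends on the orientation of the twisting, and the hypothesis that the cable is \emph{positive} (slope greater than the Seifert slope) is exactly what pins this down. Getting these two sign conventions to agree with the absolute-value expression $|rq-sp|(-1+|p|)$, rather than its negative, is where I would spend the bulk of the verification.

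Concretely, the expected identity to land on is
\[
r'\, sl(K',[\Sigma']) = \frac{1}{\gcd(r,p)}\left(|p|\, r\, sl_\Q(K,[\Sigma]) + |rq-sp|(-1+|p|)\right),
\]
where $r'$ is the order of $\Sigma_\text{new}$ along the cable; since Lemma~\ref{lem:cable} gives $r'=r/\gcd(p,r)$ and the formula as stated has no $r'$ on the left, one must track the order bookkeeping to confirm that the $1/\gcd(r,p)$ prefactor already absorbs the correct power and that in the intended application $r'=1$. Once the singularity contributions from copies, meridional disks, and twisted bands are tallied with correct signs and the normalization matched against Lemma~\ref{lem:cable}, the formula follows directly from Lemma~\ref{computersl}.
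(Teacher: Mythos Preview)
Your proposal is correct and follows essentially the same route as the paper: the paper's proof also counts characteristic-foliation singularities on $\Sigma'$ piece by piece---$|p|/\gcd(p,r)$ copies of $\Sigma$ contributing their singularities verbatim, $|rq-sp|/\gcd(p,r)$ meridional disks each giving one positive elliptic point, and $|p(rq-sp)/\gcd(p,r)|$ half-twisted bands each giving one positive hyperbolic point (the positivity of the cabling fixing the sign)---and then invokes Lemma~\ref{computersl}. Your additional remarks about the $r'$ normalization and the cross-check against the Euler-characteristic formula of Lemma~\ref{lem:cable} are reasonable due-diligence, but the paper simply asserts the foliation is as described (appealing to the explicit model in $C_\epsilon$) and leaves the arithmetic implicit.
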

\begin{proof}
For each singular point in the characteristic foliation of $\Sigma$ there are $\frac{|p|}{\gcd(p,r)}$ corresponding singular points on $\Sigma'$ (coming from the  $\frac{|p|}{\gcd(p,r)}$ copies of $\Sigma$ used in the construction of $\Sigma'$). For each of the $\frac{|rq-sp|}{\gcd(p,r)}$ meridional disk used to construct $\Sigma'$ we get one positive elliptic point in the characteristic foliation of $\Sigma'.$ Finally, since cabling was positive the $|\frac{p(rq-sp)}{\gcd(p,r)}|$  half twisted bands added to create $\Sigma'$ each have a single positive hyperbolic singularity in their characteristic foliation. (It is easy to check the characteristic foliation is as described as the construction mainly takes place in a solid torus neighborhood of $K$ where we can write an explicit model for this construction.) The lemma follows now from Lemma~\ref{computersl}.
\end{proof}

%%%%%%%%%%%%%%%%%%%%%%%%%%%%%%%%%
\section{Rational unknots}\label{sec:ratknot}
%%%%%%%%%%%%%%%%%%%%%%%%%%%%%%%%%
A knot $K$ in manifold $M$ is called a \dfn{rational unknot} if a rational Seifert surface $D$ for $K$ is a disk. Notice that a neighborhood of $K$ union a neighborhood of $D$ is a punctured lens space. Thus the only manifold to have rational unknots (that are not actual unknots) are manifolds with a lens space summand. In particular the only irreducible manifolds with rational unknots (that are not actual unknots) are lens spaces. So we restrict our attention to lens spaces in this section. 
A knot $K$ in a lens space is a rational unknot if and only if the complement of a tubular neighborhood of $K$ is diffeomorphic to a solid torus. This of course implies that the rational unknots in $L(p,q)$ are precisely the cores of the Heegaard tori. 
\begin{thm}\label{thm:mainratknot}
Rational unknots in tight contact structures on lens spaces are weakly transversely simple and Legendrian simple.
\end{thm}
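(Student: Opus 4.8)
The plan is to reduce the classification of rational unknots in lens spaces to an understanding of the tight contact structures on the complement, which is a solid torus, together with an analysis of the possible ways a Legendrian or transverse knot can sit as the core of a Heegaard torus. Since a rational unknot $K$ has solid-torus complement, its exterior $X(K)$ is a solid torus, and by Honda's classification the tight contact structures on a solid torus with specified convex boundary are completely understood in terms of the boundary slope and the associated continued-fraction / dividing-curve data. The strategy is to show that the rational classical invariants $\tb_\Q$, $\rot_\Q$ (resp.\ $sl_\Q$) together with the homotopy class determine the Legendrian (resp.\ transverse) knot up to contactomorphism, which is exactly the statement of weak Legendrian and transverse simplicity.

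First I would fix a tight contact structure $\xi$ on $L(p,q)$ and a rational unknot $K$, realized as the core of one of the Heegaard solid tori, and make $\partial N(K)$ convex. The key structural fact is that $L(p,q)$ splits as a union of two solid tori, so any Legendrian $K$ isotopic to a core can be isotoped so that its complement is a standard neighborhood whose tight contact structure is determined by the boundary dividing data. I would then use the Legendrian (rational) invariants $\tb_\Q(K)$ and $\rot_\Q(K)$ to compute the boundary slope and the relative Euler class of $\xi$ on the complementary solid torus; by Lemma~\ref{lem:stabilize} these invariants behave correctly under stabilization, so maximal-$\tb_\Q$ representatives can be found and their stabilizations account for all the lower ones. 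The heart of the argument is that, given the fixed overall tight contact structure on $L(p,q)$, two Legendrian cores with equal $\tb_\Q$ and $\rot_\Q$ induce contactomorphic tight structures on the complementary solid torus (this is where Honda's uniqueness/convex surface theory enters), and a contactomorphism of the complements extending across $N(K)$ produces the desired contactomorphism of the pairs. Transverse simplicity then follows from the Legendrian classification via Lemma~\ref{lem:stabLeg}, relating transverse knots to their Legendrian push-offs up to negative stabilization, together with the push-off formula of Lemma~\ref{lem:pushoff}.

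The main obstacle I anticipate is bookkeeping the arithmetic of the rational invariants against the continued-fraction data governing tight contact structures on the solid torus, and in particular verifying that the range of achievable $(\tb_\Q, \rot_\Q)$ pairs exactly matches the distinct tight structures, with the correct stabilization picture (an admissible "mountain range") and no extra coincidences or missing representatives. Care is needed because $\tb_\Q$ is only well-defined up to the rational ambiguity coming from the choice of Seifert-surface homology class noted in Section~\ref{sec:basics}, so I would fix the framing coming from the Heegaard splitting once and for all and phrase everything relative to it. A secondary subtlety is the word \emph{weakly}: I expect that distinct rational unknots with the same invariants are contactomorphic but perhaps only after an ambient diffeomorphism that need not be isotopic to the identity, so I would be careful to claim contactomorphism rather than contact isotopy where appropriate, matching the statement.
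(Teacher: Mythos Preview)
Your proposal is correct and follows essentially the same route as the paper: reduce to the complementary solid torus, invoke Honda's classification of tight contact structures on solid tori to pin down the Legendrian core up to contactomorphism from its rational invariants, and deduce the transverse statement from the Legendrian one via Lemma~\ref{lem:stabLeg}. The paper packages this slightly differently---it first isolates a unique maximal-$\tb_\Q$ representative (via the thickening statement quoted as Lemma~\ref{startclass}) and shows every other Legendrian core destabilizes to it, rather than directly matching complements for arbitrary equal $(\tb_\Q,\rot_\Q)$---but this is a cosmetic difference in how the Honda input is applied, not a different idea.
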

A knot type is \dfn{weakly transversely simple} if it is determined up to contactomorphism (topologically) isotopic to the identity by its knot type and (rational) self-lining number. We have the analogous definition for \dfn{weakly Legendrian simple}. We will prove this theorem in the standard way. That is we identify the maximal value for the rational Thurston-Bennequain invariant, show that there is a unique Legendrian knot with that rational Thurston-Bennequin invariant and finally show that any transverse unknot with non-maximal rational Thurston-Bennequin invariant can be destabilized. The transverse result follows from the Legendrian result as Lemma~\ref{lem:stabLeg} shows.

%______________________________
\subsection{Topological rational unknots}

We explicitly describe $L(p,q)$ as follows: fix $p>q>0$ and set
\[
L(p,q)=V_0\cup_\phi V_1
\]
where $V_i=S^1\times D^2$ and we are thinking of $S^1$ and $D^2$ as the unit complex circle and disk, respectively. In addition the gluing map $\phi:\partial V_{1}\to \partial V_{0}$ is given in standard 
longitude-meridian coordinates on the torus  by the matrix
$$\begin{pmatrix} -p' & p\\ q' & -q \end{pmatrix},$$
where $p'$ and $q'$ satisfy $pq'-p'q=-1$ and $p>p'>0, q\geq q'>0.$ We can find such $p', q'$ by taking a continued fraction expansion of $-\frac pq$
\[
-\frac pq =a_0-\frac{1}{a_1-\ldots\frac{1}{a_{k-1}-\frac{1}{a_k}}}
\]
with each $a_i\geq 2$ and then defining 
\[
-\frac {p'}{q'}=a_0-\frac{1}{a_1-\ldots\frac{1}{a_{k-1}-\frac{1}{a_k+1}}}.
\]

Since we have seen that a rational unknot must be isotopic to the core of a Heegaard torus we clearly have four possible (oriented) rational unknots: namely $K_0, -K_0, K_1$ and $-K_1$ where $K_i=S^1\times \{ pt\}\subset V_i.$ We notice that $K_0$ represents a generator in the homology of $L(p,q)$ and $-K_0$ is the negative of that generator. So except in $L(2,1)$ the knots $K_0$ and $-K_0$ are not isotopic or homeomorphic via a homeomorphism isotopic to the identity. Similarly for $K_1$ and $-K_1.$ Moreover, in homology $q[K_0]=[K_1].$ So if $q\neq1$ or $p-1$ then $K_1$ is not homeomorphic via a homeomorphism isotopic to the identity to $K_0$ or $-K_0.$ We have established most of the following lemma.
\begin{lemma}
The set of rational unknots up to homeomorphism isotopic to the identity in $L(p,q)$ is given by
\[
\{\text{rational unknots in $L(p,q)$}\}=
\begin{cases}
\{K_1\}& p=2\\
\{K_1, -K_1\}& p\not=2, q=1 \text{ or } p-1\\
\{K_0,-K_0, K_1, -K_1\}& q\not= 1 \text{ or } p-1
\end{cases}
\]
\end{lemma}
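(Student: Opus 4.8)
## Proof Proposal

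The plan is to establish the claimed classification by combining the topological facts already recorded in the excerpt. We have seen that any rational unknot in $L(p,q)$ must be isotopic to the core of one of the two Heegaard solid tori $V_0$ or $V_1$, so the only candidates (as oriented knots) are $K_0,-K_0,K_1,-K_1$. The entire content of the lemma is therefore to decide, in each homological regime, which of these four are related by an ambient homeomorphism isotopic to the identity. The key principle I would use throughout is that such a homeomorphism preserves the homology class of an oriented knot; hence two of our candidates can coincide up to isotopy-to-the-identity only if they carry the same class in $H_1(L(p,q))\cong\Z/p\Z$.

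First I would set up the homology bookkeeping carefully. Writing $[K_0]$ for the generator of $H_1(L(p,q))=\Z/p\Z$ coming from the core of $V_0$, the gluing matrix $\phi$ gives the relation $q[K_0]=[K_1]$ already noted in the excerpt, and of course $[-K_i]=-[K_i]$. So the four classes are $\{[K_0],\,-[K_0],\,q[K_0],\,-q[K_0]\}$ in $\Z/p\Z$. The analysis then reduces to elementary congruence conditions modulo $p$. The three cases in the statement correspond exactly to how many of these four classes collapse together.

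The main step is to match each case to the coincidence of classes. When $p=2$ we have $-1\equiv 1\pmod 2$, so $[K_0]=-[K_0]$ and likewise $[K_1]=-[K_1]$; moreover in $L(2,1)$ one checks directly that all four cores are in fact isotopic (the two Heegaard tori are interchanged), collapsing the list to a single knot $\{K_1\}$, as claimed. When $p\neq 2$ but $q=1$ or $q=p-1$, the relation $q[K_0]=[K_1]$ gives $[K_1]=\pm[K_0]$, so $K_1$ and $K_0$ (up to orientation) represent the same homology, and indeed one can exhibit an ambient isotopy (isotopic to the identity) identifying $K_0$ with $K_1$ or $-K_1$; this collapses the four candidates to $\{K_1,-K_1\}$. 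In the remaining generic case $q\neq 1,p-1$ (and $p\neq 2$), the four classes $\pm[K_0],\pm[K_1]$ are pairwise distinct in $\Z/p\Z$, so no two of the candidates can be related by a homeomorphism isotopic to the identity, leaving all four knots $\{K_0,-K_0,K_1,-K_1\}$.

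The hard part will be the positive direction rather than the obstruction: showing that when the homology classes agree the knots really are related by a homeomorphism isotopic to the identity, not merely abstractly homeomorphic. For the orientation-reversal on a single core this is handled by the obvious $\pi$-rotation of the solid-torus factor, which is isotopic to the identity precisely when $p=2$ forces the boundary identification to be compatible; for the $K_0\leftrightarrow K_1$ identification in the $q=1,\,p-1$ cases one must produce a genuine ambient isotopy exchanging the two Heegaard tori, using that these values of $q$ make the gluing map conjugate to one swapping the factors. The obstruction direction is comparatively routine, following immediately from the homology invariance noted above. Since the excerpt states that ``most of'' the lemma has already been established, I expect the remaining work to be precisely the verification of these identifications in the degenerate cases $p=2$ and $q\in\{1,p-1\}$.
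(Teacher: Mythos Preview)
Your overall plan matches the paper's: the obstruction side is the homology bookkeeping already done before the lemma, and the remaining work is to produce actual isotopies in the degenerate cases. Where you diverge is in the geometric constructions you propose for those isotopies, and here your sketch is both different from the paper and not quite right as stated.

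For $q\in\{1,p-1\}$ the paper does not argue via conjugacy of gluing maps. It simply observes that exactly these lens spaces are $S^1$--bundles over $S^2$, and that both Heegaard cores $K_0,K_1$ are fibers of this Seifert fibration; any two fibers are then isotopic through fibers, which is manifestly an ambient isotopy (hence isotopic to the identity). Your proposed route---exhibit a homeomorphism exchanging the Heegaard tori and then argue it is isotopic to the identity---can be made to work, but it is more delicate: the swap map exists whenever $q^2\equiv\pm1\pmod p$, and one must still check it is isotopic to the identity precisely when $q=1$ or $p-1$. The bundle argument bypasses this entirely.

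For $p=2$ your ``$\pi$--rotation of the solid-torus factor'' does not do what you want. A rotation of the $D^2$ factor is isotopic to the identity but does not reverse the core's orientation; a rotation in the $S^1$ factor does not reverse orientation either. To flip the orientation of the core you need an involution such as complex conjugation on both factors, and that map is \emph{not} obviously isotopic to the identity---indeed it is not, in general lens spaces. The paper instead uses that $L(2,1)=\R P^3$ and $K_0=\R P^1$: one reverses the orientation of $\R P^1$ by an isotopy inside $\R P^2\subset\R P^3$ (a line in the projective plane can be rotated onto itself with reversed orientation), and this extends to an ambient isotopy of $\R P^3$. That is the missing ingredient in your $p=2$ case.
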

\begin{proof}
Recall that $L(p,q)$ is an $S^1$--bundle over $S^2$ if and only if $q=1$ or $p-1.$ In this case $K_0$ and $K_1$ are both fibers in this fibration and hence are isotopic. We are left to see that $K_0$ and $-K_0$ are isotopic in $L(2,1)=\R P^3.$ To this end notice that $K_0$ can be thought of as an $\R P^1.$ In addition, we have the natural inclusions $\R P^1\subset \R P^2\subset \R P^3.$ It is easy to find an isotopy of $\R P^1=K_0$ in $\R P^2$ that reverse the orientation. This isotopy easily extends to $\R P^3.$
\end{proof}

%______________________________
\subsection{Legendrian rational unknots}

For results concerning convex surfaces and standard neighborhoods of Legendrian knots we refer the reader to \cite{EtnyreHonda01b}.

Recall, in the classification of tight contact structures on $L(p,q)$ given in  \cite{Honda00a} the following lemma was proven as part of Proposition 4.17.
\begin{lemma}\label{startclass}
Let $N$ be a standard neighborhood of a Legendrian knot isotopic to the rational unknot $K_1$ in a tight contact structure on $L(p,q).$ Then there is another neighborhood with convex boundary $N'$ such that $N\subset N'$  and $\partial N'$ has two dividing curves parallel to the longitude of $V_1.$ Moreover any two such solid tori with convex boundary each having two dividing curves of infinite slope have contactomorphic complements. 
\end{lemma}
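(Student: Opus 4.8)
The plan is to follow the standard convex-surface-theory argument of \cite{Honda00a, EtnyreHonda01b}, splitting the statement into an \emph{existence} part (producing $N'$) and a \emph{uniqueness} part (contactomorphic complements), with the latter being the real difficulty. First I would normalize the picture: after a contact isotopy I may assume the Legendrian knot underlying $N$ is the core of $V_1$, so that $N\subset V_1$ and the topological complement $M\setminus N$ is isotopic to the Heegaard solid torus $V_0$. Since $\xi$ is tight on all of $L(p,q)$, its restriction to this complementary solid torus is tight. By definition of a standard neighborhood, $\partial N$ is convex with exactly two dividing curves, whose slope is the contact (Thurston--Bennequin) framing of the core in the $V_1$ coordinates $\{\lambda_1,\mu_1\}$; call it $s_0$. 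I measure the slope of a curve $a\lambda_1+b\mu_1$ as $a/b$, so that the longitude $\lambda_1$ has slope $\infty$ and the meridian $\mu_1$ has slope $0$; the goal is then to thicken $N$ to a convex torus of dividing slope $\infty$.

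For existence I would work inside the toric annulus $W=\overline{V_1\setminus N}$, making both boundary components convex, with dividing slopes $s_0$ on the inner torus and the slope of the convex Heegaard torus $\partial V_1$ on the outer. By Honda's classification of tight contact structures on $T^2\times I$, every slope lying between the two boundary slopes along the relevant path in the Farey graph is realized by a convex torus parallel to the $T^2$-factor (equivalently, the intermediate slopes are produced by successive bypass attachments governed by the Imbalance Principle). The one genuinely computational point is to check that $\infty=[\lambda_1]$ lies on this path; this follows from the continued-fraction data defining $L(p,q)$ together with the fact that $K_1$ admits Legendrian representatives, equivalently that the meridian $\mu_0$ of $V_0$ is positioned so that $\lambda_1$ is Farey-between $s_0$ and the Heegaard slope. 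Letting $T$ be the realizing convex torus of slope $\infty$ and $N'$ the solid torus it bounds on the $N$ side yields $N\subset N'$ with $\partial N'$ convex and two dividing curves parallel to $\lambda_1$, as required.

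The main obstacle is the uniqueness clause. Given two such thickenings $N_1',N_2'$, their complements $C_i=\overline{M\setminus N_i'}$ are tight solid tori (each isotopic to $V_0$) with convex boundary carrying two dividing curves of slope $\infty$. Measured against the meridian $\mu_0$ of $C_i$, this dividing direction is $\lambda_1=-p'\lambda_0+q'\mu_0$, which meets $\partial D_{C_i}$ geometrically $2p'$ times; so when $p'>1$ the abstract classification of tight structures on a solid torus with this boundary permits several distinct tight fillings, and contactomorphism of the $C_i$ cannot follow from the local boundary data alone. I would instead appeal to the global rigidity of the fixed structure $\xi$: using Honda's factorization of $\xi$ into basic slices across the convex Heegaard torus, the slope-$\infty$ torus of minimal (two-curve) dividing set is the innermost standard piece of this factorization and is therefore unique up to contact isotopy once $\xi$ is fixed. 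Consequently $\partial N_1'$ and $\partial N_2'$ are contact-isotopic, and an ambient contact isotopy of $M$ carrying one to the other restricts to a contactomorphism $C_1\to C_2$. Making precise why the two-dividing-curve, slope-$\infty$ torus is forced to be this specific piece of the basic-slice factorization --- rather than merely one of the a priori several tight structures on the abstract complementary solid torus --- is the step I expect to require the most care.
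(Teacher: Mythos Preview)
The paper does not give its own proof; the lemma is quoted from Honda's classification (\cite{Honda00a}, Proposition~4.17), and your outline is precisely that convex-surface-theory argument.

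Your existence half is essentially correct. One simplification: rather than working in the thickened torus $\overline{V_1\setminus N}$, which requires first arranging and controlling a convex Heegaard torus $\partial V_1$, you can work directly in the complement $C=\overline{M\setminus N}$, which is already a tight solid torus with meridian $\mu_0$; Honda's solid-torus classification then produces the intermediate convex torus with dividing slope $[\lambda_1]$ once you verify the Farey-arc condition you flag.

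For uniqueness your concern is well placed, but the resolution you propose does not quite work as stated. The slope-$\infty$ torus you are after is $\partial N'$, which is the \emph{boundary} of the solid torus $C_i$ whose basic-slice decomposition you would take, not an interior slice of it; so ``the innermost standard piece of the factorization'' does not single it out, and uniqueness of that decomposition up to shuffling is itself the point at issue. In \cite{Honda00a} the contactomorphic-complements clause is instead obtained from the full classification. The existence step shows that the gluing map from tight structures on the solid torus $V_0$ (with two dividing curves parallel to $\lambda_1$) to tight structures on $L(p,q)$ is surjective, while a separate Legendrian-surgery construction, distinguished by relative Euler class data on the continued-fraction blocks, shows it is injective. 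Injectivity is exactly the assertion that two slope-$\infty$ splittings of the same $\xi$ have contactomorphic complements. So the step you expect ``to require the most care'' is in fact handled by this global count, not by locating a canonical torus inside a fixed decomposition.
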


We note that $N'$ from this lemma is the standard neighborhood of a Legendrian knot $L$ topologically isotopic to $K_1.$ Moreover one easily checks that 
\[
\tb_\Q(L)=-\frac {p'}p
\]
where $p'<p$ is defined as in the previous sections. 
The next possible larger value for $\tb_\Q$ is $-\frac {p'}p + 1>-\frac 1p$ which violates the Bennequin bound.
\begin{thm}\label{thm:1}
The maximum possible value for the rational Thurston-Bennequin invariant for a Legendrian knot isotopic to $K_1$ is $-\frac {p'}p$ and it is uniquely realized, up to contactomorphism isotopic to the identity. Moreover, any Legendrian knot isotopic to $K_1$ with non-maximal rational Thurston-Bennequin invariant destabilizes.
\end{thm}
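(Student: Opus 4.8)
My plan is to prove the three assertions—the value of the maximum, its uniqueness, and destabilization in the non-maximal case—in turn, using Lemma~\ref{startclass} and the rational Bennequin bound (Theorem~\ref{thm:ratBennequin}) as the main inputs. For the value of the maximum, I would start from the Legendrian knot $L$ produced by Lemma~\ref{startclass}, whose standard neighborhood $N'$ has convex boundary with two dividing curves parallel to the longitude of $V_1$ and which satisfies $\tb_\Q(L)=-\frac{p'}{p}$ as recorded above. Since $K_1$ has order $r=p$ and bounds a disk $D$ with $\chi(D)=1$, Theorem~\ref{thm:ratBennequin} gives $\tb_\Q\le -\frac1p$ for every Legendrian representative of $K_1$. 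By the corollary at the end of Section~\ref{sec:basics} the $\tb_\Q$ of any two Legendrian representatives of a fixed topological type differ by an integer, so every attained value is $-\frac{p'}{p}+n$ with $n\in\Z$; as $-\frac{p'}{p}+1=\frac{p-p'}{p}\ge\frac1p>-\frac1p$, the bound forces $n\le 0$, and $-\frac{p'}{p}$ is the attained maximum.

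For uniqueness, let $L_0$ and $L_1$ both be Legendrian representatives of $K_1$ with $\tb_\Q=-\frac{p'}{p}$. Because $\tb_\Q$ is computed from the contact framing and this maximal value corresponds exactly to the framing by the longitude of $V_1$, the standard neighborhood $N_i$ of each $L_i$ already has convex boundary with two dividing curves parallel to that longitude, so $N_i$ is a torus of the type appearing in Lemma~\ref{startclass} and need not be enlarged. Standard neighborhoods with the same dividing data are contactomorphic, and by Lemma~\ref{startclass} the complements $L(p,q)\setminus N_0$ and $L(p,q)\setminus N_1$ are contactomorphic as well; gluing these contactomorphisms along the boundary yields a contactomorphism of $L(p,q)$ carrying $L_0$ to $L_1$.

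For the destabilization statement, suppose $\tb_\Q(L)<-\frac{p'}{p}$. Form the standard neighborhood $N$ of $L$ and the torus $N'\supset N$ of Lemma~\ref{startclass}; then $N'\setminus\Int N$ is a $T^2\times I$ whose two convex boundary tori carry dividing curves of different slopes, the inner slope recording the non-maximal value $\tb_\Q(L)$ and the outer slope being the longitude of $V_1$. A nontrivial tight $T^2\times I$ of this kind contains a bypass along $\partial N$ from the outside, and attaching such a bypass enlarges $N$ to the standard neighborhood of a Legendrian knot $L''$ with $L=S_\pm(L'')$, cf.\ \cite{EtnyreHonda01b}; hence $L$ destabilizes.

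The main obstacle lies in the uniqueness step: Lemma~\ref{startclass} produces only an abstract contactomorphism of complements, and promoting the resulting map of $L(p,q)$ to one \emph{isotopic to the identity}, as the statement requires, calls for control of the relevant mapping classes of $L(p,q)$ together with the fact that $L_0$ and $L_1$ are each topologically isotopic to the fixed knot $K_1$. A secondary point needing care is verifying, in the destabilization step, that the $T^2\times I$ layer genuinely supplies a bypass attached from the correct side, so that it records a destabilization rather than a stabilization.
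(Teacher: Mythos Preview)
Your approach is essentially the paper's: Lemma~\ref{startclass} supplies both the uniqueness (via contactomorphic complements) and the inclusion $N\subset N'$ needed for destabilization, while the Bennequin bound rules out any larger value of $\tb_\Q$. The paper shortcuts your bypass discussion by directly citing the well-known fact from \cite{EtnyreHonda01b} that a standard neighborhood contained in another standard neighborhood forces the inner Legendrian to be a stabilization of the outer one, and it too leaves the ``isotopic to the identity'' refinement implicit.
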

\begin{proof}
The uniqueness follows from the last sentence in Lemma~\ref{startclass}. The first part of the same lemma also establishes the destabilization result as it is well known, see \cite{EtnyreHonda01b}, that if the standard neighborhood of a Legendrian knot is contained in the standard neighborhood of another Legendrian knots then the first is a stabilization of the second.
\end{proof}

To finish the classification of Legendrian knots in the knot type of $K_1$ we need to identify the rational rotation number of the Legendrian knot $L$ in the knot type of $K_1$ with maximal rational Thurston-Bennequin invariant. To this end notice that if we fix the neighborhood $N'$ from Lemma~\ref{startclass} as the standard neighborhood of the maximal rational Thurston-Bennequin invariant Legendrian knot $L$ then we can choose a non-zero section $s$ of $\xi|_{\partial N'}.$ This allows us to define a relative Euler class for $\xi|_{N'}$ and $\xi|_C$ where $C={\overline{L(p,q)-N'}}.$ One easily sees that the Euler class of $\xi|_{N'}$ vanishes and the Euler class $e(\xi)$ is determined by its restriction to the solid torus $C.$ In particular, $\xi|_C$ is determined by
\[
e(\xi)(D)=e(\xi|_C,s)(D)\mod p,
\] 
where $D$ is the meridional disk of $C$ and the generator of $2$--chains in $L(p,q).$ Thinking of $D$ as the rational Seifert surface for $L$ we can arrange the foliation near the boundary to be by Legendrian curves parallel to the boundary (see \cite[Figure 1]{Honda00a}). From this we see that we can take a Seifert cable $L_c$ of $L$ to be Legendrian and satisfy 
\[
\rot_\Q(L)=\frac 1p \, \rot(L_c).
\]
By taking the foliation on $\partial N'=\partial C$ to be so that $D\cap \partial C$ is a ruling curve we see that 
\[
\rot_\Q(L)=\frac 1p\, \rot(L_c)=\frac 1p\, e(\xi|_C,s)(D).
\]
By the classification of tight contact structures on solid tori \cite{Honda00a} we see the number $e(\xi|_C,s)(D)$ is always a subset of  $\{p'-1-2k:k=0,1,\ldots, p'-1\}$ and determined by the Euler class of $\xi.$ To give a more precise classification we need to know the range of possible values for the Euler class of tight $\xi$ on $L(p,q).$ This is in principal known, but difficult to state in general. We consider several cases in the next subsection. 

We clearly have the analog of Theorem~\ref{thm:1} for $-K_1.$ That is all Legendrian knots in the knot type $-K_1$ destabilize to the unique maximal representative $L$ with $\tb_\Q(L)=-\frac{p'}{p}$ and rotation number the negative of the rotation number for the maximal Legendrian representative of $K_1.$

\begin{proof}[Proof of Theorem~\ref{thm:mainratknot}]
Notice that if $q^2\equiv \pm1 \mod p$ we have a diffeomorphism $\psi:L(p,q)\to L(p,q)$ that exchanges the Heegaard tori and if $q=1$ or $p-1$ then this diffeomorphism is isotopic to the identity. Thus when $p\not=2$ and $q=1$ or $p-1$ we have competed the proof of Theorem~\ref{thm:mainratknot}. Note also that we always have the diffeomorphism $\psi':L(p,q)\to L(p,q)$ that preserves each of the Heegaard tori but acts by complex conjugation on each factor of each Heegaard torus (recall that the Heegaard tori are $V_i=S^1\times D^2$ where $S^1$ and $D^2$ are a unit circle and disk in the complex plane, respectively). If $p=2$ then this diffeomorphism is also isotopic to the identity. Thus finishing the proof of Theorem~\ref{thm:mainratknot} in this case. We are left to consider the case when $q\not=1$ or $p-1.$ In this case we can understand $K_0$ and $-K_0$ by reversing the roles of $V_0$ and $V_1.$ That is we consider using the gluing map 
\[
\phi^{-1}=\begin{pmatrix} q& p\\ q' & p' \end{pmatrix}
\]
 to glue $\partial V_0$ to $\partial V_1.$
\end{proof}

%____________________________
\subsection{Classification results}

To give some specific classification results we recall that for the lens space $L(p,1), p$ odd,  there is a unique tight contact structure for any given  Euler class not equal to the zero class in $H_2(L(p,q);\Z).$ From this, the fact that $p'=p-1$ in this case, and the discussion in the previous subsection we obtain the following theorem.
\begin{thm}
For $p$ odd and any integer $l\in \{p-2-2k:k=0,1,\ldots, p-2\}$ there is a unique tight contact structure $\xi_l$ on $L(p,1)$ with $e(\xi_l)(D)=l$ (here $D$ is again the 2-cell in the  CW-decomposition of $L(p,1)$ given in the last subsection). In this contact structure the knot types $K_1$ and $-K_1$ are weakly Legendrian simple and transversely simple. Moreover the rational Thurston-Bennequin invariants realized by Legendrian knots in the knot type $K_1$ are
\[
\{-\frac{p-1}{p}-k: k \text{ a non-positive integer}\}.
\]
The range for Legendrian knots in the knot type $-K_1$ is the same. The range of rotation numbers realized for a Legendrian knot in the knot type $K_1$ with rational Thurston-Bennequin invariant $-\frac{p-1}{p}-k$ is 
\[
\{\frac{l}{p} + k-2m: m=0,\ldots, k \}
\]
and for $-K_1$ the range is 
\[
\{\frac{-l}{p} + k-2m: m=0,\ldots, k \}.
\]
The range of possible rational self-linking numbers for transverse knots in the knot type $K_1$ is
\[
\{-\frac{p+l-1}{p}-k: k \text{ a non-positive integer}\}
\]
and in the knot type $-K_1$ is
\[
\{-\frac{p-l-1}{p}-k: k \text{ a non-positive integer}\}.
\]
\end{thm}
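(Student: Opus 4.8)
The plan is to derive the classification by assembling Theorem~\ref{thm:1}, the relative Euler class computation of the preceding subsection, and the stabilization formulas of Lemmas~\ref{lem:stabilize}, \ref{lem:pushoff}, and \ref{lem:stabLeg}; the genuinely new content is arithmetic. First I would dispose of the existence and uniqueness of $\xi_l$. The cited classification of tight contact structures on $L(p,1)$ with $p$ odd provides exactly one tight structure for each non-zero class in $H^2(L(p,1);\Z)\cong\Z/p$. I would check that as $k$ runs over $0,\dots,p-2$ the integers $l=p-2-2k$ are $p-1$ distinct odd numbers in $(-p,p)$, hence represent the $p-1$ non-zero classes; thus the $\xi_l$ are precisely the tight structures and are pairwise non-isomorphic by a diffeomorphism isotopic to the identity.

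For the Legendrian knots in the type $K_1$ I would start from Theorem~\ref{thm:1}: since $p'=p-1$ here, the maximal $\tb_\Q$ equals $-\frac{p-1}{p}$ and is uniquely realized by some $L$, and every Legendrian $K_1$ destabilizes to $L$. I would then invoke the standard argument for a knot type with a unique maximal representative to which everything destabilizes: because positive and negative stabilizations commute up to Legendrian isotopy, any representative is determined (up to contactomorphism isotopic to the identity) by the numbers $a$ of positive and $b$ of negative stabilizations applied to $L$, and these are recovered from $(\tb_\Q,\rot_\Q)$; this is weak Legendrian simplicity. To locate the peak I would use the relative Euler class computation preceding Theorem~\ref{thm:1}, namely $\rot_\Q(L)=\frac 1p\,e(\xi|_C,s)(D)$ with $e(\xi|_C,s)(D)\in\{p'-1-2k\}=\{p-2-2k\}$; since this set equals the set of $l$'s and since $e(\xi_l)(D)=l$, comparing modulo $p$ forces $e(\xi|_C,s)(D)=l$, so $\rot_\Q(L)=\frac lp$.

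The full mountain range then follows from Lemma~\ref{lem:stabilize}: applying $a$ positive and $b$ negative stabilizations to $L$ yields $\tb_\Q=-\frac{p-1}{p}-(a+b)$ and $\rot_\Q=\frac lp+a-b$, so with $k=a+b$ and $m=b$ one recovers $\tb_\Q=-\frac{p-1}{p}-k$ and rotation set $\{\frac lp+k-2m:m=0,\dots,k\}$, the maximal $\tb_\Q$ being $-\frac{p-1}{p}$ and the others obtained by subtracting positive integers. Orientation reversal fixes $\tb_\Q$, negates $\rot_\Q$ (so the peak of $-K_1$ has $\rot_\Q=-\frac lp$), and interchanges $S_+$ with $S_-$, which gives the mirrored ranges for $-K_1$.

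For transverse knots, weak transverse simplicity is immediate from the Legendrian classification via Lemma~\ref{lem:stabLeg}, which identifies transverse knots with Legendrian representatives modulo negative stabilization. By Lemma~\ref{lem:pushoff}, $sl_\Q(T(L'))=\tb_\Q(L')-\rot_\Q(L')$; evaluating along the mountain range gives $-\frac{p+l-1}{p}-2a$ where $a=k-m$ counts positive stabilizations, a quantity unchanged by negative stabilization, as it must be. Hence the realized self-linking numbers are $-\frac{p+l-1}{p}$ together with its translates by $-2$, with maximum $-\frac{p+l-1}{p}$; the $-K_1$ case is identical after $l\mapsto-l$, giving maximum $-\frac{p-l-1}{p}$. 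The fiddly step, and the one I would take most care over, is matching the normalization of the relative Euler class $e(\xi|_C,s)(D)$ from the preceding subsection with the absolute class $e(\xi_l)(D)=l$ so as to certify $\rot_\Q(L)=l/p$ with the correct integer representative rather than merely modulo $p$; the rest is the bookkeeping above together with confirming that unique maximum plus universal destabilization really does upgrade to full weak simplicity.
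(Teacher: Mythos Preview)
Your proposal is correct and follows essentially the same route as the paper. The paper presents this theorem as a direct consequence of the preceding subsection's discussion (Theorem~\ref{thm:1}, the relative Euler class formula $\rot_\Q(L)=\frac1p\,e(\xi|_C,s)(D)$, and the stabilization lemmas) together with the input that for $L(p,1)$ with $p$ odd one has $p'=p-1$ and a unique tight structure for each nonzero Euler class; you have simply made those steps explicit, including the arithmetic check that $\{p-2-2k\}$ exhausts the nonzero residues mod $p$ and the care needed to pin down $e(\xi|_C,s)(D)$ as an integer rather than only mod $p$.
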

Results for other $L(p,q)$ can easily be written down after the range of Euler classes for the tight contact structures is determined. 

%%%%%%%%%%%%%%%%%%%%%%%%%%%%%%%%%
\section{Rationally null-homologous links and uniform Seifert surfaces}\label{sec:links}
%%%%%%%%%%%%%%%%%%%%%%%%%%%%%%%%%

Much of our previous discussion for rational knots also applies to links, but many of the statements are a bit more awkward (or even uncertain) if we do not restrict to certain kinds of rational Seifert surfaces.     

Let $L=K_1 \cup \dots \cup K_n$ be an oriented link of $\Z$--homological order $r>0$ in a $3$--manifold $M$ and denote a tubular neighborhood of $L$ by $N(L) = N(K_1) \cup \dots \cup N(K_n)$.  By $X(L)$ denote the link exterior $\overline{M \setminus N(L)}$.  Fix a framing for each $N(K_i)$.
Since $L$ has order $r$, there is an embedded $(r,s_i)$--curve on $\bdry N(K_i)$ for each $i$ that together bound an oriented surface $\Sigma^\circ$ in $X(L)$.  Radially coning $\bdry \Sigma^\circ \subset N(L)$ to $L$ gives a surface $\Sigma$ in $M$ whose interior is embedded and for which $\bdry \Sigma |_{K_i}$ wraps $r$ times around $K_i$.  By tubing if needed, we may take $\Sigma$ to be connected.  Such a surface $\Sigma$ will be called a \dfn{uniform rational Seifert surface} for $L$, and we say $L$ \dfn{$r$--bounds} $\Sigma$.

Notice that as $\Z$--homology chains, $\bdry \Sigma = rL = 0$.  Since as $1$--chains there may exist varying integers $r_i$ such that $r_1 K_1 + \dots + r_n K_n = 0$, the link $L$ may have other rational Seifert surfaces that are not uniform.  However, only for a uniform rational Seifert surface $\Sigma$ do we have that $\bdry(\frac{1}{r} \Sigma) = L$ as $\Q$--homology chains.  

With respect to uniform rational Seifert surfaces, the definition of rational linking number for rationally null-homologous links extends directly:  If $L$ is an oriented link that $r$--bounds $\Sigma$ and $L'$ is another oriented link, then
\[
lk_\Q(L,L') = \frac 1r \Sigma \cdot L'
\]
with respect to $[\Sigma]$.  If $L'$ is rationally null-homologous and $r'$--bounds $\Sigma'$, then this linking number is symmetric and independent of choice of $\Sigma$ and $\Sigma'$.

It now follows that the entire content of Sections~\ref{sec:basics} and \ref{sec:bennequin} extends in a straightforward manner to transverse/Legendrian links $L$ that $r$--bound a uniform rational Seifert surface $\Sigma$ in a contact manifold.   
The generalization of Theorem~\ref{thm:supportgen} is straightforward as well, but relies upon the generalized statements of Lemmas~\ref{lem:cable} and \ref{lem:slcable}.  Rather than record the general statements of these lemmas (which becomes cumbersome for arbitrary cables), we present them only for integral resolutions  of rationally null-homologous links with uniform rational Seifert surfaces.

\begin{lemma} \label{lem:intresolutionlink}
Let $L$ be a link in $M$ that $r$--bounds a uniform rational Seifert surface $\Sigma$ for $r >0$.  Choose a framing on each component $K_i$ of $L$, $i=1, \dots,n$ so that $\Sigma$ approaches $K_i$ as $(r,s_i)$--curves.  Let $L'$ be the link formed by replacing each $K_i$ by its $(r,q_i)$--cable where $q_i \neq s_i$.  If $L$ is a rationally fibered link with fiber $\Sigma$, then $L'$ is a (null-homologous) fibered link bounding a fiber $\Sigma'$ with
\[
\chi(\Sigma') = \chi(\Sigma) + (1-r) \sum_{i=1}^n |s_i - q_i|.
\]
Furthermore, assume $M$ is endowed with a contact structure $\xi$ and $L$ is a transverse link.  If the integral resolution $L'$ of $L$ is positive and transverse then
\[ 
sl(L', [\Sigma']) = r\, sl_\Q(K,[\Sigma]) + (-1+r)\sum_{i=1}^n |s_i - q_i|.
\]
\end{lemma}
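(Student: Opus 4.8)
The plan is to prove Lemma~\ref{lem:intresolutionlink} by reducing the link case to the single-component case already treated in Lemmas~\ref{lem:cable} and \ref{lem:slcable}, cabling one component at a time. Since an integral resolution is exactly a $(r,q_i)$--cabling of each $K_i$ with $q_i \neq s_i$ (and with $p=r$ in the notation of Lemma~\ref{lem:cable}), I would first observe that $\gcd(p,r)=\gcd(r,r)=r$, so each cabled component has order and multiplicity $1$; this is precisely the statement that $L'$ is null-homologous (integral) rather than merely rational. The fact that $L'$ is fibered then follows directly from Lemma~\ref{lem:cable} applied iteratively, one component at a time, since cabling one binding component leaves the others' framings and fibration structure intact.

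For the Euler characteristic, the plan is to substitute $p=r$ into the formula of Lemma~\ref{lem:cable}. With $\gcd(p,r)=r$ the prefactor $\frac{1}{\gcd(p,r)}=\frac 1r$ and $|p|=r$, so cabling $K_i$ alone changes the fiber by
\[
\chi(\Sigma_\text{new}) - \chi(\Sigma_\text{old}) = \frac 1r\bigl(r\,\chi(\Sigma_\text{old}) + |rs_i - rq_i|(1-r)\bigr) - \chi(\Sigma_\text{old}) = (1-r)|s_i-q_i|.
\]
Summing these contributions over $i=1,\dots,n$ (the cablings are performed independently in disjoint solid tori $N(K_i)$, so the changes are additive) gives the claimed formula $\chi(\Sigma') = \chi(\Sigma) + (1-r)\sum_{i=1}^n |s_i-q_i|$. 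The self-linking identity is handled in exactly the same way using Lemma~\ref{lem:slcable}: setting $p=r$ there, the factor $\frac{|p|}{\gcd(r,p)}=1$ of copies of $\Sigma$ means each singular point of $\Sigma_\xi$ contributes once, while the $|rq_i - s_i r|(-1+|p|) = r|q_i-s_i|(r-1)$ term (divided by $\gcd(r,p)=r$) accounts for the new elliptic and positive hyperbolic singularities from the meridional disks and twisted bands; summing over components yields $sl(L',[\Sigma']) = r\,sl_\Q(K,[\Sigma]) + (-1+r)\sum_{i=1}^n|s_i-q_i|$.

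The main obstacle I anticipate is bookkeeping in the multi-component setting: one must verify that cabling the components successively does not create interference, i.e.\ that the surface $\Sigma'$ built from the uniform rational Seifert surface $\Sigma$ by the construction described after Lemma~\ref{lem:cable} (taking one copy of the old fiber together with meridional disks and half-twisted bands localized near each $N(K_i)$) is genuinely connected, null-homologous, and that the characteristic foliation computation from Lemma~\ref{lem:slcable} remains local to each solid torus neighborhood. Because the uniform hypothesis forces the \emph{same} $r$ for every component and each $(r,q_i)$--cable has order $1$, all the $\gcd(p,r)=r$ simplifications hold uniformly, and the per-component contributions simply add. I would therefore state that the proof is a straightforward component-by-component application of Lemmas~\ref{lem:cable} and \ref{lem:slcable} with $p=r$, with the only real content being the verification that the localized constructions and their foliation contributions do not interact across distinct components.
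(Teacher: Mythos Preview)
Your proposal is correct and amounts to the same argument as the paper's. The paper's proof simply says: build $\Sigma'$ by attaching, for each $i$, exactly $|s_i-q_i|$ meridional disks of $N(K_i)$ and $r\,|s_i-q_i|$ half-twisted bands to the single copy of $\Sigma$, then follow the singular-point count in the proof of Lemma~\ref{lem:slcable}. Your substitution $p=r$ into Lemmas~\ref{lem:cable} and \ref{lem:slcable} recovers precisely these numbers ($\tfrac{|p|}{\gcd(p,r)}=1$ copy of $\Sigma$, $\tfrac{|rq_i-s_ir|}{\gcd(p,r)}=|s_i-q_i|$ disks, $\tfrac{|p(rq_i-s_ir)|}{\gcd(p,r)}=r|s_i-q_i|$ bands), so the two presentations are identical in content.

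One small remark: your framing as an \emph{iterated} application of Lemma~\ref{lem:slcable} is slightly awkward, because after cabling only some components the intermediate link no longer $r$--bounds a uniform rational Seifert surface (it has order $1$ on the cabled components and order $r$ on the rest), so the rational self-linking number as defined in Section~\ref{sec:links} does not directly apply to it. You already anticipate this in your final paragraph, and your actual computation sidesteps the issue by tracking the quantity $(e_--h_-)-(e_+-h_+)$ on the surface throughout. That is exactly what the paper does, and it avoids ever needing to interpret the intermediate stages as self-linking numbers.
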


\begin{proof}
The construction of $\Sigma'$ is done by attaching $|s_i -q_i|$ copies of meridional disks of $N(K_i)$ with $r\,|s_i-q_i|$ half twisted bands to $\Sigma$ for each $i$.  Now follow the proof of Lemma~\ref{lem:slcable}.
\end{proof}

\begin{thm}\label{thm:supporgenlink}
Let $L$ be a rationally null-homologous, fibered transverse link in a contact $3$--manifold $(M,\xi)$ such that $\xi$ is tight when restricted to the complement of $L$.  Suppose $L$ $r$--bounds the fibers of the fibration of $M-L$ and let $\Sigma$ be a fiber. 
Then  $r\, sl^\xi_\Q(L,\Sigma)=-\chi(\Sigma)$ if and only if  either $\xi$ agrees with the contact structure supported by the rational open book determined by $L$ and $\Sigma$ or is obtained from it by adding Giroux torsion along tori which are incompressible in the complement of $L.$
\end{thm}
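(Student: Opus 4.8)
The plan is to reduce to the integral (honest open book) case exactly as in the proof of Theorem~\ref{thm:supportgen}, with the knot-specific Lemma~\ref{lem:slcable} replaced by its link counterpart Lemma~\ref{lem:intresolutionlink}. Concretely, I would first choose a positive integral resolution $L'$ of $L$, applying the cabling construction of Section~\ref{sec:obcable} to each rational binding component $K_i$. By Theorem~\ref{thm:resolve}, $L'$ is the binding of an honest open book decomposition of $M$ that supports the same contact structure $\xi$ as the rational open book determined by $L$. This converts the problem into a statement about an integral fibered transverse link, to which the Etnyre--Van Horn-Morris classification (Theorem~\ref{thm:support}) applies directly.

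The heart of the argument is that the defining equality is preserved under resolution. Writing $\Sigma'$ for the fiber of $L'$ and $q_i$ for the resolving cabling coefficients, Lemma~\ref{lem:intresolutionlink} gives
\[
sl(L', [\Sigma']) = r\, sl_\Q(L,[\Sigma]) + (-1+r)\sum_{i=1}^n |s_i - q_i|,
\qquad
\chi(\Sigma') = \chi(\Sigma) + (1-r)\sum_{i=1}^n |s_i - q_i|.
\]
Adding these two identities, the correction terms cancel and one obtains
\[
sl(L', [\Sigma']) + \chi(\Sigma') = r\, sl_\Q(L,[\Sigma]) + \chi(\Sigma),
\]
so that $r\, sl^\xi_\Q(L,\Sigma) = -\chi(\Sigma)$ holds if and only if $sl_\xi(L', [\Sigma']) = -\chi(\Sigma')$. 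Thus sharpness of the rational Bennequin bound for $(L,\Sigma)$ is equivalent to sharpness of the integral Bennequin bound for $(L',\Sigma')$, and this step is a routine computation once Lemma~\ref{lem:intresolutionlink} is in hand.

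Granting this equivalence, Theorem~\ref{thm:support} classifies $\xi$ in terms of the honest open book $(L', \Sigma')$: either $\xi$ is supported by $(L',\Sigma')$, or it is obtained from $\xi_{(L',\Sigma')}$ by adding Giroux torsion along tori incompressible in $M \setminus L'$. Since $L'$ supports the same contact structure as the rational open book of $L$, and since the exterior of $L$ sits inside $M \setminus L'$, this conclusion then translates back to the desired statement for $L$. The main obstacle I expect is precisely this last translation rather than the computation: one must verify that the tightness hypothesis on $M \setminus L$ yields tightness of $\xi$ on $M \setminus L'$ (using that $L'$ may be taken on a standard tight neighborhood of $L$, so that $M\setminus L'$ is obtained by gluing the tight exterior $X(L)$ to standard tight solid tori along annuli), and that the incompressible tori carrying Giroux torsion in the complement of $L'$ correspond to incompressible tori in the complement of $L$. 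Both points are expected to be routine for the honest-binding reduction, since the cabling alters the complement only inside the chosen neighborhood $N(L)$, but they are where the real care is needed.
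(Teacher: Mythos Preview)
Your proposal is correct and follows precisely the approach the paper takes: the paper's proof is the single sentence ``Follow the proof of Theorem~\ref{thm:supportgen} using Lemma~\ref{lem:intresolutionlink} instead of Lemmas~\ref{lem:cable} and \ref{lem:slcable},'' and you have carried this out with the details filled in. Your explicit cancellation computation (making the ``if and only if'' transparent) and your flagging of the tightness and incompressibility translations as the places needing care are welcome elaborations that the paper leaves implicit.
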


\begin{proof}
Follow the proof of Theorem~\ref{thm:supportgen} using Lemma~\ref{lem:intresolutionlink} instead of Lemmas~\ref{lem:cable} and \ref{lem:slcable}.
\end{proof}

%\bibliography{references}
%\bibliographystyle{plain}

\def\cprime{$'$} \def\cprime{$'$}

\end{document}